\newtheoremstyle{standard}{10pt}{3pt}{\itshape}{}{\bfseries}{.}{.5em}{}
\theoremstyle{standard}
\newtheorem{lemma}{Lemma}[section]
\newtheorem{prop}[lemma]{Proposition}
\newtheorem{thm}[lemma]{Theorem}
\newtheorem{cor}[lemma]{Corollary}
\newtheoremstyle{definition}{10pt}{3pt}{}{}{\bfseries}{.}{.5em}{}
\theoremstyle{definition}
\newtheorem{defi}[lemma]{Definition}  
\newtheorem{nota}[lemma]{Notation}  
\newtheorem{ex}[lemma]{Example}
\newtheorem{expl}[lemma]{Explanation}  
\newtheorem{rem}[lemma]{Remark}
\DeclareMathOperator{\Q}{\mathbf{Qcoh}}
\DeclareMathOperator{\C}{\mathbf{Coh}}
\DeclareMathOperator{\M}{\mathbf{Mod}}
\DeclareMathOperator{\grM}{\mathbf{grMod}}
\DeclareMathOperator{\Alg}{\mathbf{Alg}}
\DeclareMathOperator{\Set}{\mathbf{Set}}
\DeclareMathOperator{\Spec}{Spec}
\DeclareMathOperator{\End}{End}
\DeclareMathOperator{\colim}{colim}
\DeclareMathOperator{\Sym}{Sym}
\DeclareMathOperator{\sym}{\mathfrak{S}}
\DeclareMathOperator{\op}{op}
\DeclareMathOperator{\Hom}{Hom}
\renewcommand{\O}{\mathcal{O}}
\newcommand{\A}{\mathcal{A}}
\renewcommand{\L}{\mathcal{L}}
\renewcommand{\P}{\mathds{P}}
\begin{document}

\title{Tensorial schemes}
\author{Martin Brandenburg\footnote{Fachbereich Mathematik, WWU Münster. E-Mail: brandenburg [at] uni-muenster [dot] de}}
\date{}
\maketitle

\begin{abstract} \noindent Jacob Lurie (\cite{Lur}) has shown that for geometric stacks $X,Y$ every cocontinuous tensor functor $F : \Q(X) \to \Q(Y)$ is the pullback $f^*$ of a morphism $f : Y \to X$ under the additional assumption that $F$ is \emph{tame}. In this note we get rid of this assumption if $X$ is a projective scheme. In general, we call a scheme $X$ \emph{tensorial} if every cocontinuous tensor functor $\Q(X) \to \Q(Y)$ is induced by a morphism $Y \to X$ and show that projective schemes are tensorial and tensorial schemes are closed under various operations. \end{abstract}

\section*{Introduction}
 
Gabriels Reconstruction Theorem (\cite{Gab}) states that a noetherian scheme $X$ can be reconstructed from the abelian category of quasi-coherent modules $\Q(X)$. Meanwhile there are many variants of this Theorem (\cite{Bal}, \cite{Gar}), but they do not recover $X$ in a functorial way. Recently, Jacob Lurie (\cite{Lur}) has shown that a geometric stack (in particular every quasi-compact semi-separated scheme) can be functorially reconstructed from $\Q(X)$, considered as an abelian tensor category. Namely, he has shown that for geometric stacks $X,Y$ the category of morphisms $Y \to X$ is equivalent to the category of \emph{tame} cocontinuous tensor functors $\Q(X) \to \Q(Y)$. Here, tameness is a rather global flatness condition. Our goal is to eliminate this tameness condition: We study schemes $X$ such that for all schemes $Y$ (and therefore for all algebraic stacks $Y$) the category of morphisms $Y \to X$ is equivalent to the category of cocontinuous tensor functors $\Q(X) \to \Q(Y)$ and call them \emph{tensorial}. Our main result is that every projective scheme (over an affine base) is tensorial.
  
The first section covers some basics on tensor categories (a.k.a. monoidal categories) and our conventions. The second section reviews the universal cocompletion of a (tensor) category. The focus lies on the two examples $\M(S)$, where $S$ is a ring, and $\grM(S)$, where $S$ is a graded ring, preparing the proof that affine and projective schemes are tensorial. The third section is devoted to the proof of a universal property of $\Q(\P^n)$, which says that this is the free cocomplete tensor category on an invertible object $\L$ and a "good" epimorphism $1^{n+1} \to \L$. This is a categorification of the universal property of $\P^n$ and will show in the fifth section that $\P^n$ are tensorial. The fourth section categorifies the universal property of a closed immersion, or more generally an affine morphism. This will imply that closed subschemes of tensorial schemes are tensorial.
 
% In the fifth section we show in how far $f$ is already determined by $f^*$ in general.

For various suggestions I would like to thank James Dolan, Todd Trimble, Laurent Moret-Bailly, Jacob Lurie, Tom Goodwillie and Christopher Deninger.
 
\section{Tensor categories} Throughout this note, every ring in consideration is commutative. A \emph{tensor category} is a category together with a tensor product which is unital, associative and symmetric up to compatible isomorphisms; these are called $\otimes$-categories ACU in (\cite{SaR},2.4). Additionally, we assume them to be $R$-linear for some fixed ring $R$: This means that the underlying category is $R$-linear and the tensor product is $R$-linear in both variables (\cite{SaR}, 0.1.2). Tensor functors are understood to be strong, that is they respect the tensor structure up to a canonical \emph{iso}morphism (as in \cite{SaR}, 4.1.1; 4.2.4). Besides they should be, of course, $R$-linear (\cite{SaR}, 4.1.3). For $R$-linear tensor categories $C,D$ we denote by $\Hom_{\otimes/R}(C,D)$ the category of all morphisms $C \to D$, or just $\Hom_{\otimes}(C,D)$ if $R$ is clear from the context. Morphisms in this category are tensor natural transformations, that is natural transformations which are compatible with the tensor structure (\cite{SaR}, 4.4.1). The unit of a tensor category $C$ is usually denoted by $1_C$. The a priori noncommutative $R$-algebra $\End(1_C)$ turns out to be commutative by a variation of the Eckmann-Hilton argument (\cite{SaR}, 1.3.3.1).
  
By a \emph{cocomplete} tensor category we mean a tensor category whose underlying category is cocomplete (i.e. has all small colimits) such that the tensor product is cocontinuous in each variable. This means that for all objects $X$ and all small diagrams $\{Y_i\}$ the canonical morphism
\[\colim_i (X \otimes Y_i) \to X \otimes \colim_i Y_i\]
is an isomorphism; similarily for the other variable, which also follows by symmetry. For discrete diagrams this is just the categorified distributive law
\[\bigoplus_i (X \otimes Y_i) = X \otimes \bigoplus_i Y_i.\]
Therefore we can think of $R$-linear cocomplete tensor categories as categorified $R$-algebras and might call them $R$-$2$-algebras. In fact, Alex Chirvasitu and Theo Johnson-Freyd (\cite{ChJo}, 2.3.1) call them $2$-rings, dropping the enrichment $\M(R)$ and assuming presentability of the underlying category.
  
If $S$ is an $R$-algebra, then $\M(S)$ is an $R$-linear cocomplete tensor category. The tensor product is the usual tensor product of modules and the unit is $S$. More generally, if $X$ is an $R$-scheme, then its category of quasi-coherent modules $\Q(X)$ is an $R$-linear cocomplete tensor category with tensor product $\otimes_X$ and unit $\O_X$. This is our main example. Tannaka Reconstruction theorems such as the one by Jacob Lurie (\cite{Lur}) suggest that all the information of (a nice) $X$ is already encoded in this $2$-algebra $\Q(X)$ and therefore we can think of usual algebraic geometry as $2$-affine (see also \cite{ChJo}, 1.2).
 
If $C,D$ are $R$-linear cocomplete tensor categories, we denote by $\Hom_{c\otimes/R}(C,D)$ the category of all cocontinuous tensor functors $C \to D$; if $C,D$ are just cocomplete categories, we denote by $\Hom_{c}(C,D)$ the category of cocontinuous functors. For example, every morphism of $R$-schemes $f : X \to Y$ induces a cocontinuous tensor functor $f^* : \Q(Y) \to \Q(X)$. Our main question is: Does this induce an equivalence of categories?
\begin{eqnarray*}
\Hom_R(X,Y) & \to & \Hom_{c\otimes/R}(\Q(Y),\Q(X)) \\ f & \mapsto & f^* \end{eqnarray*}
Remark that this functor is automatically faithful since $\Hom(X,Y)$ is discrete.
 
\section{Universal cocompletion}

In this section we review a basic construction from category theory, namely the universal cocompletion of a small category. I am indebpted to James Dolan who suggested this approach to me in order to achieve the universal property of $\Q(X)$ for affine or projective $X$. My previous proof was rather ad hoc, since it used locally free resolutions and cohomological methods.
 
Let $C$ be a small category. Let $\widehat{C} = \Hom(C^{\op},\Set)$ be the category of presheaves on $C$. This category is cocomplete and comes equipped with the Yoneda embedding $y : C \to \widehat{C}$. The following proposition says that it is the universal cocompletion:
 
\begin{prop} \label{uEc} Let $D$ be a cocomplete category. With the above notations, $y$ induces an equivalence of categories $\Hom(C,D) \cong \Hom_c(\widehat{C},D)$.\end{prop}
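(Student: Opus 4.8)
The plan is to construct an explicit quasi-inverse to the restriction functor $(-)\circ y \colon \Hom_c(\widehat{C},D) \to \Hom(C,D)$ by means of a pointwise left Kan extension along $y$, and then to verify that the two composites are naturally isomorphic to the respective identities. The only nontrivial inputs are that colimits in $\widehat{C}$ are computed objectwise and that every presheaf is canonically a colimit of representables (the density, or co-Yoneda, lemma).

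Given a functor $G \colon C \to D$, I would define $LG \colon \widehat{C} \to D$ by the pointwise formula: for a presheaf $P$ set $LG(P) = \colim_{(c,x)} G(c)$, the colimit over the category of elements of $P$, whose objects are pairs $(c,x)$ with $c \in C$ and $x \in P(c)$ and whose morphisms are the maps in $C$ compatible with the sections. This colimit exists since $C$ is small and $D$ is cocomplete. Equivalently $LG(P)$ is the coend $\int^{c} P(c) \cdot G(c)$, where $S \cdot d$ denotes the copower (the coproduct of $S$-many copies of $d$); I would use this description to read off functoriality in $P$.

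First I would check that $LG$ is cocontinuous. Since colimits of presheaves are formed objectwise, the evaluation $P \mapsto P(c)$ is cocontinuous for each $c$; the copower $S \mapsto S \cdot d$ is cocontinuous in $S$; and a coend is itself a colimit. As colimits commute with colimits, $P \mapsto \int^{c} P(c)\cdot G(c)$ is cocontinuous, so $LG \in \Hom_c(\widehat{C},D)$. Next, restricting $LG$ back along $y$ recovers $G$: because the category of elements of a representable $y(c)$ has the terminal object $(c,\id_c)$, the defining colimit collapses and $LG(y(c)) \cong G(c)$ naturally. This yields a natural isomorphism $(L G)\circ y \cong G$, one of the two triangle identities.

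The heart of the argument, and the step I expect to be the main obstacle, is the reverse direction: for a cocontinuous $F \colon \widehat{C} \to D$ I must produce a natural isomorphism $L(F\circ y) \cong F$. Here I would invoke density, namely that every presheaf is the colimit of the representables mapping into it, $P \cong \colim_{(c,x)} y(c)$, which is exactly the statement that $\id_{\widehat{C}}$ is the left Kan extension of $y$ along itself. Applying the cocontinuous $F$ and commuting it past this colimit gives $F(P) \cong \colim_{(c,x)} F(y(c)) = \colim_{(c,x)} (F\circ y)(c) = L(F\circ y)(P)$, naturally in $P$. Checking that these isomorphisms are natural in $F$ and in $G$, and that they assemble into an adjoint equivalence rather than a mere bijection on isomorphism classes, is the bookkeeping that remains; but once density and the objectwise computation of colimits are in hand, it is formal.
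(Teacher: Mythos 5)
Your proposal is correct and follows essentially the same route as the paper's sketch: both construct the quasi-inverse as the pointwise left Kan extension along $y$, given by the colimit of $G$ over the category of elements of a presheaf, and both rest on the density of representables (the co-Yoneda lemma) together with cocontinuity to identify $L(F\circ y)$ with $F$. You merely fill in the verifications (cocontinuity of $LG$, collapse of the colimit on representables) that the paper delegates to the reference.
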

 
\begin{proof} This is well-known (\cite{Kel}, 4.4), we sketch the proof. We map a cocontinuous functor $\widehat{C} \to D$ to the composition $C \stackrel{y}{\to} \widehat{C} \to D$. In the other direction, let $\alpha : C \to D$ be a functor. This induces a cocontinuous functor $D \to \widehat{C} \to \widehat{C}$ which has a left adjoint $\hat{\alpha} : \widehat{C} \to D$, which is given explicitely as follows: Every presheaf $F : C^{\op} \to \Set$ can be canonically written as a colimit representables:
\[F = \colim_{e \in F(c)} \Hom(-,c)\]
This is an easy consequence of the Yoneda-Lemma. Now we have to define
\[\widehat{\alpha}(F) = \colim_{e \in F(c)} \alpha(c). \qedhere\]
\end{proof}
 
The same construction yields the universal cocompletion of a small tensor category:

\begin{prop} Let $C$ be a small tensor category. Then we can endow $\widehat{C}$ with the structure of a cocomplete tensor category (in an essentially unique way), such that the Yoneda embedding $y : C \to \widehat{C}$ is a tensor functor. Moreover, for every cocomplete tensor category $D$ we have $\Hom_{\otimes}(C,D) \cong \Hom_{c\otimes}(\widehat{C},D)$. \end{prop}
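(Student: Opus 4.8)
The plan is to first note that the tensor product on $\widehat{C}$ is forced, which simultaneously yields essential uniqueness and tells us how to define it. Recall from the proof of Proposition \ref{uEc} that every presheaf is a canonical colimit of representables. If $\otimes$ is to be cocontinuous in each variable and $y$ a tensor functor, then for $F = \colim_{e \in F(a)} \Hom(-,a)$ and $G = \colim_{e' \in G(b)} \Hom(-,b)$ we must have
\[F \otimes G \cong \colim_{e \in F(a),\, e' \in G(b)} \big(\Hom(-,a) \otimes \Hom(-,b)\big) \cong \colim_{e \in F(a),\, e' \in G(b)} \Hom(-,a \otimes b),\]
the second isomorphism coming from $y(a) \otimes y(b) \cong y(a \otimes b)$. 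I would take this formula (the Day convolution of $F$ and $G$, equivalently the coend $(F \otimes G)(c) = \int^{a,b} F(a) \times G(b) \times \Hom(c, a \otimes b)$) as the definition, with unit $y(1_C) = \Hom(-,1_C)$.

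Next I would check that this makes $\widehat{C}$ a cocomplete tensor category with $y$ strong monoidal. Cocontinuity in each variable is immediate, since $F \otimes G$ is manifestly a colimit in $F$ and separately in $G$ and colimits commute with colimits. The associativity, symmetry and unit constraints are transported from those of $C$: on representables they are given by the constraints of $C$ via $y(a) \otimes y(b) \cong y(a \otimes b)$, and since every functor in sight is cocontinuous, both the constraints and their coherence extend uniquely from representables to all presheaves. That $y$ is a tensor functor holds by construction, and essential uniqueness follows because any competing structure satisfying the hypotheses must agree with the forced formula, including on the constraint isomorphisms.

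For the universal property I would build on Proposition \ref{uEc}, which already gives $\Hom(C,D) \cong \Hom_c(\widehat{C}, D)$ via $\alpha \mapsto \widehat{\alpha}$ with $\widehat{\alpha}(F) = \colim_{e \in F(c)} \alpha(c)$; it remains to match the tensor data. Given a tensor functor $\alpha : C \to D$, the unit comparison $1_D \to \widehat{\alpha}(y(1_C))$ is the structure isomorphism of $\alpha$, while the binary comparison $\widehat{\alpha}(F) \otimes \widehat{\alpha}(G) \to \widehat{\alpha}(F \otimes G)$ is assembled from those of $\alpha$ and is an isomorphism because both $F \mapsto \widehat{\alpha}(F) \otimes \widehat{\alpha}(G)$ and $F \mapsto \widehat{\alpha}(F \otimes G)$ are cocontinuous in $F$ (and likewise in $G$) and coincide on representables, where the statement is exactly that $\alpha$ preserves $\otimes$. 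The same principle — a cocontinuous comparison is determined on representables — shows that tensor natural transformations between functors $C \to D$ correspond bijectively to tensor natural transformations between their extensions, upgrading the equivalence of Proposition \ref{uEc} to $\Hom_{\otimes}(C,D) \cong \Hom_{c\otimes}(\widehat{C}, D)$.

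The genuinely laborious step, which I expect to be the main obstacle, is the coherence bookkeeping: verifying that the transported constraints on $\widehat{C}$ satisfy the pentagon, triangle and hexagon axioms, and that the comparison cells for $\widehat{\alpha}$ are coherent. Each of these is forced by the corresponding coherence in $C$ and $D$ together with the principle that cocontinuous functors agreeing on representables agree, so none is conceptually deep; but writing out the coherence diagrams and reducing them to the representable case is where the real work lies.
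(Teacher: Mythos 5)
Your proposal follows exactly the paper's route: define the tensor product on $\widehat{C}$ by the Day convolution formula $F \otimes G = \colim_{(e,e') \in F(a) \times G(b)} \Hom(-, a \otimes b)$ with unit $y(1_C)$, forced by cocontinuity and the requirement that $y$ be a tensor functor, and then observe that the equivalence of Proposition \ref{uEc} restricts to tensor functors. The paper leaves the coherence bookkeeping to the reference \cite{ImKe}, which is precisely the step you correctly identify as the laborious part.
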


\begin{proof} This is also well-known (\cite{ImKe}). We have to extend the tensor product from $C$ to $\widehat{C}$, the unit being $y(1_C)$. Guided by the above representation of a presheaf as a colimit of representables, we have to define for all $F,G : C^{\op} \to \Set$
\[F \otimes G := \colim_{(e,e') \in F(c) \times G(c')} \Hom(-,c \otimes c')\]
Then $y$ becomes a tensor functor and it is easy to check that $\Hom(C,D) \cong \Hom_{c\otimes}(\widehat{C},D)$ restricts to the desired equivalence. \end{proof}
 
\begin{rem} The last propositions hold equally well in the context of $R$-linear categories. Here we have to replace $\Set$ by $\M(R)$, i.e. presheaves on $C$ are $R$-linear functors $C^{\op} \to \M(R)$. \end{rem}
  
\begin{ex} Let $S$ be an $R$-algebra. Let $\{S\}$ be the $R$-linear tensor category which consists of just one object $S$ whose endomorphism algebra is defined by $S$. The universal cocompletion identifies with $\widehat{\{S\}} \cong \M(S)$, the category of right-$S$ modules. Its universal property reads as:\end{ex}

\begin{prop} Let $C$ be a cocomplete $R$-linear category and $S$ be an $R$-algebra. Then there is an equivalence of categories
\[\Hom_{c\otimes/R}(\M(S),C) \cong \Hom_{\Alg(R)}(S,\End(1_C)).\]
\end{prop}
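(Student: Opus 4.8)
The plan is to reduce the statement to the universal property of the cocompletion together with the identification $\widehat{\{S\}} \cong \M(S)$ recorded in the preceding example. First I would apply the previous proposition (universal cocompletion of a small tensor category) to the one-object tensor category $\{S\}$: it yields an equivalence $\Hom_{c\otimes/R}(\widehat{\{S\}}, C) \cong \Hom_{\otimes/R}(\{S\}, C)$, and transporting along $\widehat{\{S\}} \cong \M(S)$ gives $\Hom_{c\otimes/R}(\M(S), C) \cong \Hom_{\otimes/R}(\{S\}, C)$. It therefore suffices to produce a natural equivalence $\Hom_{\otimes/R}(\{S\}, C) \cong \Hom_{\Alg(R)}(S, \End(1_C))$, where the right-hand side is viewed as a discrete category.

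Next I would analyze tensor functors out of $\{S\}$. By construction $\{S\}$ has a single object $S$ which is simultaneously its unit, with $\End(S) = S$ and $S \otimes S = S$. Since any tensor functor preserves the unit up to a canonical isomorphism, a tensor functor $F : \{S\} \to C$ comes equipped with an isomorphism $F(S) \cong 1_C$; using this isomorphism, the action of $F$ on $\End(S) = S$ furnishes an $R$-algebra homomorphism $S \to \End(F(S)) \cong \End(1_C)$. This defines the comparison functor. Conversely, given $\varphi \in \Hom_{\Alg(R)}(S, \End(1_C))$, I would define a tensor functor sending $S$ to $1_C$ and each $s \in S$ to $\varphi(s) \in \End(1_C)$, with coherence isomorphisms supplied by the unit constraints of $C$; compatibility of $\varphi$ with multiplication and with the unit guarantees the associativity and unit diagrams. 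These two assignments are mutually inverse on objects.

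Finally I would verify that the left-hand category is \emph{discrete}, so that the object-level bijection upgrades to an equivalence. A tensor natural transformation $\eta : F \Rightarrow G$ has a single component $\eta_S$, and the unit-compatibility axiom forces $\eta_S$ to be the canonical composite $F(S) \cong 1_C \cong G(S)$; hence there is at most one such $\eta$, and naturality with respect to every $s \in \End(S) = S$ holds precisely when $F$ and $G$ induce the same algebra homomorphism. Thus $\Hom_{\otimes/R}(\{S\}, C)$ is equivalent to the discrete category on $\Hom_{\Alg(R)}(S, \End(1_C))$, as required.

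The main obstacle I anticipate is bookkeeping the coherence data: checking that ``sends the unit to the unit'' together with the induced map on $\End(S)$ really reconstitutes the full tensor-functor structure, and dually that an arbitrary algebra map produces a functor satisfying all coherence diagrams. The commutativity of $\End(1_C)$ (noted earlier via the Eckmann--Hilton argument) and the commutativity of $S$ are exactly what make $\Hom_{\Alg(R)}(S, \End(1_C))$ the correct target; I would take care to confirm that the constructed functor is genuinely $R$-linear and symmetric monoidal rather than merely monoidal.
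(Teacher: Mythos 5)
Your proposal is correct and follows essentially the same route as the paper: reduce via the universal cocompletion and the identification $\widehat{\{S\}} \cong \M(S)$ to tensor functors $\{S\} \to C$, note that the object must go to $1_C$, and read off an $R$-algebra map $S \to \End(1_C)$. The paper's proof is only a two-line sketch of exactly this; your version merely fills in the verification that the hom-category is (equivalent to) a discrete one and that the two constructions are mutually inverse.
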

  
\begin{proof} By the universal property of the cocompletion, the left hand side identifies with the category of $R$-linear tensor functors $\{S\} \to C$. The object function has to be $S \mapsto 1_C$ (up to equivalence). We are left with a morphism of $R$-algebras $S \to \End(1_C)$. \end{proof}

\begin{rem} The proposition may be read as: Categorification $S \mapsto \M(S)$ is left adjoint to decategorification $C \mapsto \End(1_C)$. \end{rem}

\begin{cor} \label{unimod} Let $S$ be an $R$-algebra and $X$ be an $R$-scheme. Then we have an equivalence of categories
\[\Hom_R(X,\Spec(S)) \cong \Hom_{\Alg(R)}(S,\Gamma(X,\O_X)) \cong \Hom_{c\otimes/R}(\M(S),\Q(X)).\]
The composition is given by associating to every $f : X \to \Spec(S)$ the pullback functor $f^* : \Q(\Spec(S)) \to \Q(X)$.
\end{cor}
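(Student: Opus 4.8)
The plan is to assemble the claimed equivalence by composing two identifications. The right-hand equivalence
\[
\Hom_{\Alg(R)}(S,\Gamma(X,\O_X)) \cong \Hom_{c\otimes/R}(\M(S),\Q(X))
\]
is exactly the preceding Proposition applied to the cocomplete $R$-linear tensor category $C = \Q(X)$, once we observe that $\End(1_C) = \End(\O_X) = \Gamma(X,\O_X)$, since global sections are precisely the endomorphisms of the structure sheaf as an $\O_X$-module. So this half requires no new work beyond identifying the endomorphism algebra of the unit object of $\Q(X)$ with the ring of global functions.

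Next I would establish the left-hand equivalence
\[
\Hom_R(X,\Spec(S)) \cong \Hom_{\Alg(R)}(S,\Gamma(X,\O_X)).
\]
This is the standard adjunction between the global sections functor and the $\Spec$ functor, restricted to $R$-schemes: a morphism $X \to \Spec(S)$ of $R$-schemes corresponds bijectively to an $R$-algebra homomorphism $S \to \Gamma(X,\O_X)$. I would cite the universal property of the affine scheme $\Spec(S)$, namely that $\Spec$ is right adjoint to the global sections functor on the category of schemes, and note that both sides are discrete (mere sets viewed as categories), so the equivalence is just a bijection of Hom-sets compatible with the $R$-structure.

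Finally I would verify that the composite of these two equivalences sends a morphism $f : X \to \Spec(S)$ to its pullback functor $f^*$. Tracing through: $f$ corresponds to the $R$-algebra map $f^\sharp : S \to \Gamma(X,\O_X)$ on global sections, and under the right-hand equivalence this algebra map is precisely the one recording the action of $f^*$ on the unit, i.e. the map $S = \End(1_{\M(S)}) \to \End(f^*(1)) = \End(1_{\Q(X)}) = \Gamma(X,\O_X)$. Since the cocontinuous tensor functor $\M(S) \to \Q(X)$ is determined up to unique isomorphism by this algebra map (by the previous Proposition) and $f^*$ is cocontinuous and monoidal with $f^*(\O_{\Spec S}) = \O_X$, the two functors agree.

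The only mild obstacle is the last compatibility check: one must confirm that the algebra homomorphism extracted from $f^*$ via decategorification genuinely coincides with the scheme-theoretic comorphism $f^\sharp$ on global sections, rather than merely abstractly matching cardinalities of Hom-sets. This is a naturality statement and follows from the functoriality of $f \mapsto f^*$ together with the explicit description of the equivalence in the preceding Proposition (the object $S$ must map to $1_C$, leaving only the induced map on endomorphisms), but it is where the genuine content of asserting that the \emph{composition is given by $f \mapsto f^*$} resides.
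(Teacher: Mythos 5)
Your proposal is correct and matches the paper's (implicit) argument: the corollary is stated without a separate proof precisely because it is the preceding Proposition applied to $C=\Q(X)$ with $\End(1_{\Q(X)})=\Gamma(X,\O_X)$, combined with the classical adjunction between $\Spec$ and global sections. Your final compatibility check that the composite is $f\mapsto f^*$ is the right point to flag as carrying the actual content, and your argument for it (the functor is determined by its effect on the unit's endomorphism ring, which for $f^*$ is $f^\sharp$) is exactly what is needed.
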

 
This Corollary yields a positive answer to our main question in the affine case. In order to extend this to the projective case, we need another example of a cocompletion, which is due to James Dolan:

\begin{ex} Let $G$ be an abelian group and $S = \bigoplus_{g \in G} S_g$ be a $G$-graded $R$-algebra. Consider the following tensor category $C[S]$: Objects are elements of $G$. The morphisms are given by $\Hom_{C[S]}(g,h) := S_{h-g}$. The composition is induced by the multiplication of $S$. The tensor structure is induced by the addition of $G$. Remark that every symmetry $g \otimes g \cong g \otimes g$ is just the identity. A presheaf $C[S]^{\op} \to \M(R)$ is given by a family of $R$-modules $(M_g)_{g \in G}$ together with compatible maps $S_{g-h} \mapsto \Hom(M_g,M_{h})$. This endows the $R$-module $\bigoplus_{g \in G} M_{-g}$ with the structure of a graded $S$-module. We get an equivalence between $\widehat{C[S]}$ and the category of graded modules $\grM(S)$, which maps the representable $\Hom(-,g)$ to the twist $S(g)$, which is defined by $S(g)_h=S_{g+h}$. The universal property becomes:\end{ex}

\begin{prop} With the above notations, for every cocomplete $R$-linear tensor category $C$ the category $Hom_{c\otimes/R}(\grM(S),C)$ is equivalent to the following category: Objects are families of objects $(X_g)_{g \in G}$ in $C$ together with compatible isomorphisms $X_0 \cong 1_C , X_g \otimes X_h \cong X_{g+h}$ and morphisms of $R$-modules $S_g \to \Hom(1_C,X_g)$, which are compatible in the sense that for $g,h$ the diagram
\[\xymatrix{S_g \times S_h \ar[d]^{\cdot} \ar[r] & \Hom(1_C,X_g) \times \Hom(1_C,X_h) \ar[d]^{\otimes} \\ S_{g+h} \ar[r] & \Hom(1_C,X_{g+h})}\]
commutes. Moreover, the symmetries $X_g \otimes X_g \cong X_g \otimes X_g$ are the identity.
\end{prop}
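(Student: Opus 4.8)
The plan is to reduce everything to the universal property of the cocompletion established above. Since the preceding example identifies $\widehat{C[S]}$ with $\grM(S)$, the $R$-linear tensor version of that universal property yields an equivalence $\Hom_{c\otimes/R}(\grM(S),C) \cong \Hom_{\otimes/R}(C[S],C)$. Thus the whole content of the proposition is to unpack what an $R$-linear tensor functor $F : C[S] \to C$ amounts to, and to match it term by term with the stated data; the morphisms (tensor natural transformations) on both sides then correspond automatically under this equivalence.

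First I would read off the object part: $F$ assigns to each $g \in G$ an object $X_g := F(g)$, and since $F$ is a tensor functor it comes equipped with coherent isomorphisms $X_0 = F(0) \cong 1_C$ and $X_g \otimes X_h = F(g) \otimes F(h) \cong F(g+h) = X_{g+h}$; the associativity and unit coherences for a tensor functor translate exactly into the required compatibilities of these isomorphisms. The morphism part of $F$ is a family of $R$-linear maps $\Hom_{C[S]}(g,h) = S_{h-g} \to \Hom(X_g,X_h)$. The key observation is that these are all determined by their restriction to morphisms out of the unit: a morphism $s \in S_g = \Hom_{C[S]}(0,g)$ maps to an element of $\Hom(X_0,X_g) \cong \Hom(1_C,X_g)$, giving the required $R$-linear maps $S_g \to \Hom(1_C,X_g)$. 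Conversely, an arbitrary $s \in S_{h-g}$ equals $s \otimes \id_g$ (since the tensor product of morphisms in $C[S]$ is the multiplication of $S$ and $\id_g = 1 \in S_0$), so $F(s)$ is forced to be $F(s \otimes \id_g) = F(s) \otimes \id_{X_g}$ conjugated by the structure isomorphisms; this recovers the full morphism part from the unit maps and shows that no further data is needed.

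It remains to match the two remaining conditions and to check the reverse direction. The commuting square in the statement is precisely the assertion that $F$ is multiplicative on morphisms out of the unit: for $s \in S_g$ and $t \in S_h$ one has $s \cdot t = s \otimes t$ in $C[S]$, so applying $F(s \otimes t) = F(s) \otimes F(t)$ together with the unit isomorphism $1_C \otimes 1_C \cong 1_C$ yields exactly the commutativity of the diagram. Finally, because $G$ is abelian the symmetry $g \otimes g \cong g \otimes g$ in $C[S]$ is the identity, and a tensor functor preserves symmetries, so the symmetry $X_g \otimes X_g \cong X_g \otimes X_g$ in $C$ must agree with the identity under the structure isomorphism, giving the ``Moreover'' clause. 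I expect the main bookkeeping obstacle to be the verification, in the reverse direction, that the functor reconstructed from the data $(X_g)$, the isomorphisms and the unit maps is genuinely functorial and $R$-linear (i.e. respects the composition $S_{k-h} \times S_{h-g} \to S_{k-g}$), which follows from the compatibility diagram together with the coherence of the structure isomorphisms, but requires care to organize the coherence diagrams correctly.
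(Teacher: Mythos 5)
Your proposal is correct and follows essentially the same route as the paper: reduce to $\Hom_{\otimes/R}(C[S],C)$ via the universal property of the cocompletion $\widehat{C[S]} \cong \grM(S)$, and then observe that the morphism data $S_{h-g} \to \Hom(X_g,X_h)$ is equivalent to maps out of the unit, $S_{h-g} \to \Hom(1_C,X_{h-g})$, by tensoring with an inverse of $X_g$ (the paper phrases this as $\Hom(X_g,X_h) \cong \Hom(X_g \otimes X_{-g}, X_h \otimes X_{-g}) \cong \Hom(1_C,X_{h-g})$, which is the inverse of your $s \mapsto s \otimes \id_{X_g}$). Your write-up is more explicit about multiplicativity and the symmetry clause, which the paper leaves implicit, but the argument is the same.
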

 
\begin{proof} We have $\Hom_{c\otimes/R}(\grM(S),C) \cong \Hom_{c\otimes/R}(\widehat{C[S]},C) \cong \Hom_{\otimes/R}(C[S],C)$. Now a tensor functor $X : C[S] \to C$ is given by a family of $(X_g)$ as above together with compatible morphisms $S_{h-g} \to \Hom(X_g,X_h)$, where the latter identifies with $\Hom(X_g \otimes X_{-g},X_h \otimes X_{-g}) \cong \Hom(1_C,X_{h-g})$.\end{proof}
 
\begin{defi} Let $C$ be a tensor category. An object $\L \in C$ is called \emph{invertible} if the following holds:
\begin{itemize} \item $\L$ has a dual (\cite{KaSh}, 4.2.11) $\L^{\otimes -1}$ such that the counit $\L^{\otimes -1} \otimes \L \to 1_C$ is an isomorphism.
\item The symmetry $\L \otimes \L \cong \L \otimes \L$ equals the identitiy.
\end{itemize}
\end{defi}

\begin{rem} \noindent \begin{enumerate} \item Usually (\cite{SaR}, 2.5) the symmetry is not required to be the identity. But this is useful for our purposes, as the previous proposition shows. Another reason is that for every locally free sheaf $\L$ of rank $1$ on a scheme the symmetry $\mathcal{L} \otimes \mathcal{L} \cong \mathcal{L} \otimes \mathcal{L}$ \emph{is} the identity.
 \item If $\L$ is an object of $C$ such that there is some object $\L^{\otimes -1}$ and an isomorphism $c : \L^{\otimes -1} \otimes \L \cong 1_C$, then there is a unique morphism $e : 1_C \to \L \otimes \L^{\otimes -1}$ such that $\L$ has $\L^{\otimes -1}$ as a dual with counit $c$ and unit $e$ (\cite{SaR}, 2.5.5.2). Besides, also $e$ is an isomorphism.
\item If $\L$ is invertible, then $\L^{\otimes -1}$ is also invertible. In fact, two applications of (\cite{Kas}, XIV.3.1) show that the symmetry on $\L^{\otimes -1} \otimes \L^{\otimes -1}$ is the identity.
\item If $\L$ is invertible, we can define inductively $\L^{\otimes a}$ for every $a \in \mathbb{Z}$.
\item Although the counit and unit belong to the data of an invertible object, they are uniquely determined up to canonical isomorphism. This allows us to treat invertible objects just as special objects.
\item Tensor functors preserve invertible objects.
\end{enumerate}
\end{rem}

\begin{cor} \label{unigr} Let $S=\oplus_{n \in \mathbb{N}} S_n$ be a graded $R$-algebra and $C$ be cocomplete $R$-linear tensor category. Then $\Hom_{c\otimes/R}(\grM(S),C)$ identifies with the category of pairs $(\L,s)$, where $\L \in C$ is invertible and $s : S_1 \to \Hom(1_C,\L)$ is a homomorphism of $R$-modules. \end{cor}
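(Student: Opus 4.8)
The plan is to reduce everything to the preceding proposition and then read off what the extra structure amounts to in terms of a single invertible object. Applying that proposition with grading group $\mathbb{Z}$ (extending the $\mathbb{N}$-grading of $S$ by $S_g = 0$ for $g < 0$), an object of $\Hom_{c\otimes/R}(\grM(S),C)$ is the same datum as a family $(X_g)_{g \in \mathbb{Z}}$ equipped with isomorphisms $X_0 \cong 1_C$ and $X_g \otimes X_h \cong X_{g+h}$, together with a compatible family of $R$-module maps $S_g \to \Hom(1_C,X_g)$, subject to the requirement that each symmetry $X_g \otimes X_g \cong X_g \otimes X_g$ be the identity. I would then build a functor from this category to the category of pairs $(\L,s)$ and exhibit a quasi-inverse.

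First I would set $\L := X_1$. The isomorphisms $X_{-1} \otimes X_1 \cong X_0 \cong 1_C$ exhibit $X_{-1}$ as a candidate inverse whose counit is this composite, while the symmetry condition in degree $g=1$ says precisely that the symmetry on $\L \otimes \L$ is the identity; by item 2 of the Remark the associated unit is then automatically an isomorphism, so $\L$ is invertible in the sense of the Definition. Conversely, starting from an invertible $\L$ I would recover the whole family by $X_g := \L^{\otimes g}$ for all $g \in \mathbb{Z}$ (legitimate by item 4 of the Remark), the structural isomorphisms being supplied by the invertibility data. This is exactly the point where invertibility is essential: the proposition demands objects $X_g$ for negative $g$, whereas $S$ itself is only $\mathbb{N}$-graded.

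It remains to match the $R$-module maps with the single datum $s$. Since $S_g = 0$ for $g < 0$, those maps carry no information. Granting that $S$ is generated in degree one over $R$ (so in particular $S_0 = R$, which is the setting this corollary is aimed at), the degree-zero map $S_0 \to \Hom(1_C,1_C) = \End(1_C)$ is forced to be the structural map $R \to \End(1_C)$, as it must send $1$ to $\id_{1_C}$ and be $R$-linear. The multiplicativity diagram of the proposition says exactly that the remaining maps assemble into a homomorphism of graded $R$-algebras $S \to \bigoplus_{n \geq 0}\Hom(1_C,\L^{\otimes n})$, the target being graded under $\otimes$ and the isomorphisms $\L^{\otimes m} \otimes \L^{\otimes n} \cong \L^{\otimes(m+n)}$. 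Because $S$ is generated in degree one, such a homomorphism is determined by, and may be freely prescribed through, its degree-one component $s : S_1 \to \Hom(1_C,\L)$. I expect this reconstruction of the higher-degree maps from $s$ to be the one step that genuinely requires an argument rather than bookkeeping, and it is the crux of the corollary; without generation in degree one the higher components would be independent data and the stated description by $(\L,s)$ alone would be incomplete.

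Finally I would verify that the two assignments are mutually quasi-inverse and functorial. A tensor natural transformation between two such functors is the identity on $1_C$ and hence, being compatible with the duality data, restricts to an isomorphism $\L \cong \L'$ intertwining the degree-one maps; conversely such an isomorphism of pairs induces a tensor natural transformation. The bookkeeping here I expect to be routine once the object-level correspondence above is in place, the only care being to transport the coherence isomorphisms $X_g \otimes X_h \cong X_{g+h}$ faithfully through the identification $X_g = \L^{\otimes g}$.
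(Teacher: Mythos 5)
Your route is the same as the paper's, which disposes of this corollary with a single sentence (``this directly follows from the last proposition'') plus a description of the resulting equivalence; your unpacking --- extend the grading to $\mathbb{Z}$ by $S_g=0$ for $g<0$, set $\L:=X_1$, observe that the identity-symmetry clause of the proposition in degree $1$ is exactly invertibility of $\L$ in the sense of the Definition, recover $X_g=\L^{\otimes g}$, and note that the negative-degree maps are vacuous --- is the intended argument and is correct as far as it goes. You are also right to point out that the statement tacitly requires $S$ to be generated in degree one over $S_0=R$; for a general $\mathbb{N}$-graded algebra the maps $S_n\to\Hom(1_C,X_n)$ for $n\geq 2$ are independent data and the description by $(\L,s)$ alone fails.

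There is, however, a genuine gap in the step you yourself identify as the crux. Generation in degree one gives \emph{uniqueness} of the extension of $s$ to the higher components, not \emph{existence}, so it does not follow that $s$ ``may be freely prescribed'': the extension must also kill the relations of $S$. For $S=R[x]/(x^2)$ with $x$ in degree $1$, the category of pairs $(\L,s)$ with $s$ arbitrary is strictly larger than $\Hom_{c\otimes/R}(\grM(S),C)$, since the multiplicativity square forces $s(x)\otimes s(x)=0$ in $\Hom(1_C,\L^{\otimes 2})$. The hypothesis that actually makes the corollary true as stated is $S\cong\Sym_R(S_1)$: then $S_1^{\otimes n}\to\Hom(1_C,\L)^{\otimes n}\to\Hom(1_C,\L^{\otimes n})$ is $\sym(n)$-invariant --- precisely because the symmetry of $\L\otimes\L$ is the identity, which is what that clause in the definition of invertibility is for --- and hence factors uniquely through $\Sym^n_R(S_1)=S_n$. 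This covers $S=R[x_0,\dotsc,x_n]$, the only case the paper uses (in Theorem \ref{unipr}); for a general $S$ generated in degree one you would have to add to the pair $(\L,s)$ the condition that each induced map $\Sym^n_R(S_1)\to\Hom(1_C,\L^{\otimes n})$ factors through $S_n$. Your final paragraph on morphisms and quasi-inverseness is routine once this is repaired.
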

 
This directly follows from the last proposition. An inspection of the proofs shows that that the equivalence maps $F : \grM(S) \to C$ to the pair $(\L,s)$, where $\L:=F(S(1))$ and $s:= S_1 \cong \Hom(S,S(1)) \stackrel{F}{\longrightarrow} \Hom(1_C,\L)$.
 
\section{Projective space}

We already know a universal property of the $2$-$R$-algebra $\Q(\Spec(S))$ which is similar to the universal property of the $R$-scheme $\Spec(S)$ (Corollary \ref{unimod}). Now we try to find a universal property of $\Q(\P^n_R)$ which is similar to the universal property of $\P^n_R$ as a moduli space of line bundles with $n+1$ global generators:
\[\Hom_R(X,\P^n_R) \cong \{(\L,s) : \L \text{ invertible sheaf on } X,~ s : \O_X^{n+1} \twoheadrightarrow \L\}\]
The proof will be essentially a combination of Corollary \ref{unigr} and Serre's classical results on coherent sheaves on projective space.
  
\begin{thm} \label{unipr} The tensor category $\Q(\P^n_R)$ has the following universal property: If $C$ is a cocomplete $R$-linear tensor category, then $\Hom_{c\otimes/R}(\Q(\P^n_R),C)$ is equivalent to the category of pairs $(\L,s)$, where $\L \in C$ is invertible and $s : 1^{n+1} \to \L$ is a morphism such that the sequence
\[\hspace{1.5cm} \xymatrix@C=40pt{(\L^{\otimes -1})^{\binom{n+1}{2}} \ar[r]^-{r} &  1^{n+1} \ar[r]^-{s} &  \L \ar[r] &  0} \hspace{1cm} (*)\]
is a cokernel diagram. Here $F : \Q(\P^n_R) \to C$ corresponds to the pair $(\L,s)$ with $\L := F(\O_{\P}(1))$ and $s := F(\O_{\P}^{n+1} \twoheadrightarrow \O_{\P}(1))$. \end{thm}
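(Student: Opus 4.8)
The plan is to reduce the universal property of $\Q(\P^n_R)$ to the already-established universal property of graded modules (Corollary \ref{unigr}) by exhibiting $\Q(\P^n_R)$ as a localization of $\grM(S)$, where $S = R[x_0,\dots,x_n]$ is the polynomial ring with its standard grading. The key input from classical algebraic geometry is Serre's theorem: the functor $\grM(S) \to \Q(\P^n_R)$ sending a graded module $M$ to its associated sheaf $\widetilde{M}$ is a localization, exhibiting $\Q(\P^n_R)$ as the quotient of $\grM(S)$ by the Serre subcategory of modules that are "torsion" (i.e. supported at the irrelevant ideal, or equivalently bounded above). So the first step is to identify exactly which graded modules become zero and, more importantly, which morphisms become invertible under $M \mapsto \widetilde M$.

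First I would invoke Corollary \ref{unigr} to identify $\Hom_{c\otimes/R}(\grM(S),C)$ with pairs $(\L,s)$ where $\L$ is invertible and $s : S_1 = R^{n+1} \to \Hom(1_C,\L)$, equivalently a morphism $s : 1^{n+1} \to \L$. Now precomposing with the localization functor $\grM(S) \to \Q(\P^n_R)$ embeds $\Hom_{c\otimes/R}(\Q(\P^n_R),C)$ as the \emph{full} subcategory of those cocontinuous tensor functors $\grM(S) \to C$ that kill the torsion subcategory — equivalently, those that send the defining morphisms of the localization to isomorphisms. The heart of the matter is therefore to translate this localization condition into a condition on the pair $(\L,s)$, and the claim of the theorem is that it translates \emph{exactly} into the requirement that the sequence $(*)$ be a cokernel diagram.

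The main obstacle, and the crux of the proof, is to pin down the map $r$ and verify that the cokernel condition on $(*)$ is equivalent to the functor being torsion-killing. The natural candidate for $r$ comes from the Koszul relations among the $x_i$: the $\binom{n+1}{2}$ relations $x_i x_j = x_j x_i$ give a presentation of $\O_\P(1)$ as a quotient of $\O_\P^{n+1}$ in graded-module terms, and $r$ is the Koszul differential $\bigwedge^2(R^{n+1}) \otimes \L^{\otimes -1} \to R^{n+1}$ sending $e_i \wedge e_j \mapsto s_j e_i - s_i e_j$ (tensored down by $\L^{\otimes-1}$ so that it lands in $1^{n+1}$). I would need to check that on the sheaf side this sequence is genuinely exact — which is the tail end of the Koszul resolution of $\O_\P(1)$ twisted appropriately — and conversely that imposing exactness of $(*)$ in an arbitrary $C$ forces the functor to factor through the localization.

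Concretely, for the forward direction I would verify that $F = f^*$ for the tautological $f$ makes $(*)$ a cokernel by pulling back the Euler/Koszul sequence on $\P^n_R$. For the converse — the genuinely hard step — suppose $(\L,s)$ satisfies $(*)$; I must produce a cocontinuous tensor functor $\Q(\P^n_R) \to C$ restricting to the given functor on $\grM(S)$. Here the cokernel condition $(*)$ must be shown to imply that the functor $\grM(S) \to C$ attached to $(\L,s)$ inverts every morphism that the sheafification inverts; I expect this to follow by showing that $(*)$ forces the $S$-module structure maps $S_n \to \Hom(1_C,\L^{\otimes n})$ to exhibit each $\L^{\otimes n}$ as the "degree-$n$ part generated in the expected way," so that the functor is insensitive to torsion. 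In effect the single cokernel relation $(*)$ should bootstrap, via the tensor structure and cocontinuity, into all the higher syzygies that express $\Q(\P^n_R)$ as the Proj-localization, and I would make this precise by showing directly that $(*)$ lets one reconstruct the local trivializations $\L|_{x_i \neq 0} \cong 1$ that characterize the sheaf on the standard affine cover.
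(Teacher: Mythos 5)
Your skeleton is the paper's: invoke Serre's description of (coherent) sheaves on $\P^n$ as a localization of graded $S$-modules, identify $\Hom_{c\otimes/R}(\grM(S),C)$ with pairs $(\L,s)$ via Corollary \ref{unigr}, and translate the localization condition into the single cokernel condition $(*)$. But the crux --- showing that $(*)$ alone forces the functor to invert \emph{every} inclusion $M_{\geq a}\subseteq M$ --- is exactly where you stop, and the one concrete mechanism you propose for it does not work. In an arbitrary cocomplete $R$-linear tensor category $C$ there is no standard affine cover and no object ``$\L|_{x_i\neq 0}$'': you cannot ``reconstruct the local trivializations'' because localization at a section $s_i : 1\to\L$ is not part of the given data, and trying to build it would be a substantial detour. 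The paper instead keeps everything formal: it first reduces the condition to the inclusions $S(d)_{\geq a}\subseteq S(d)$ (every finitely generated graded module is a cokernel of a map between finite sums of twists, and the functor preserves cokernels), then writes down an explicit presentation of $S(d)_{\geq a}$ by twists of $S$ (Lemma \ref{presab}, a hands-on induction on degree), so that the condition becomes ``a sequence $P(a,d)$ is a cokernel diagram''; since $P(a,d)$ is $P(0,a+d)$ tensored with the invertible $\L^{\otimes -a}$, only $P(0,m)$ matters, and these are deduced from $P(0,1)=(*)$ by the symmetric-power lemmas \ref{symsum} and \ref{symex}. That last step is where the paper's nonstandard definition of ``invertible'' earns its keep: the symmetry on $\L\otimes\L$ being the identity gives $\Sym^m(\L)=\L^{\otimes m}$, which is what lets $\Sym^m$ applied to $(*)$ produce $P(0,m)$. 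Your phrase ``bootstrap into all the higher syzygies'' names the right goal, but without something like Lemmas \ref{presab} and \ref{seq} it is an assertion, not an argument.

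Two smaller omissions: Serre's theorem in the form you need (and as cited in the paper, via \cite{Ser} and \cite{EGAIII}) requires $R$ noetherian, and the general case is recovered by the base-change adjunction $\Hom_{c\otimes/R}(\Q(\P^n_{\mathds{Z}})_R,C)\cong\Hom_{c\otimes/\mathbb{Z}}(\Q(\P^n_{\mathds{Z}}),C)$; and passing from coherent sheaves (resp.\ finitely generated graded modules) to all quasi-coherent ones needs the Ind-completion argument of Lemma \ref{ind}. Neither is deep, but both are needed to make the statement you are proving match the statement of the theorem.
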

 
\begin{expl} The morphism $r$ is defined as follows: It corresponds to a family of morphisms $\L^{\otimes -1} \to 1$ and thus morphisms $r_{i,j,k} : 1 \to \L$ for $0 \leq i < j \leq n$ and $0 \leq k \leq n$, which we define by $r_{i,j,i} = -s_j, ~ r_{i,j,j} = s_i$ and $r_{i,j,k} = 0$ for $k \neq i,j$. There is a more suggestive notation for this morphism which is motivated from the example $C=\Q(X)$: Imagine $(\L^{\otimes -1})^{\binom{n+1}{2}}$ as a twisted free module with basis $\{e_{i,j}\}_{i<j}$ and $1^{n+1}$ as a free module with basis $\{e_i\}_{i}$. Also, $s_i : 1 \to \L$ may be viewed as a global section of $\L$. Then $r$ just maps $e_{i,j} \mapsto s_i e_j - s_j e_i$. In the following we will often use this kind of suggestive notation. The requirement that $s$ is a cokernel of $r$ asserts in a sense that $\L$ is generated by the global sections $s_0,\dotsc,s_n$ and essentially the only relations between them are $s_i \otimes s_j = s_j \otimes s_i : 1 \to \L^{\otimes 2}$. This might be called a \emph{good} epimorphism $s$.
\end{expl}
 
\begin{rem} In general, not every epimorphism $1^{n+1} \to \L$ is good. The free $R$-linear cocomplete tensor category on an epimorphism $1 \to 1$ provides a counterexample, which was shown to me by James Dolan. It is explicitely given by the category of $R[x]$-modules on which $x$ acts regular. Here $x : R[x] \to R[x]$ is an epimorphism, which is not an isomorphism and therefore not good. However, the following lemma shows that in $\Q(X)$ this phenomenon does not arise, thereby establishing the analogy between the universal properties of $\Q(\P^n)$ and $\P^n$. \end{rem}
  
\begin{lemma} \label{lempr} Let $X$ be a scheme. Then $\L \in \Q(X)$ is invertible in the above sense precisely when it is in the usual sense, i.e. locally free of rank $1$. In this case, for every epimorphism $s : \O_X^{n+1} \to \L$ the sequence $(\L^{\otimes -1})^{\binom{n+1}{2}} \to \O_X^{n+1} \to \L \to 0$ is a cokernel diagram. \end{lemma}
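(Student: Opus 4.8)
The plan is to treat the two claims in turn, reducing each to a computation on an affine open over which $\L$ becomes free of rank $1$.

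For the first claim, consider an object $\L \in \Q(X)$ that is locally free of rank $1$. I would take $\L^{\otimes -1} := \calHom(\L,\O_X)$ and verify that the evaluation $\L^{\otimes -1} \otimes \L \to \O_X$ is an isomorphism; this is local, and on an open where $\L \cong \O_X$ it is immediate. The requirement that the symmetry of $\L \otimes \L$ be the identity is the content of item (1) of the remark following the definition of invertible object, and likewise reduces to the trivial case $\L = \O_X$. Conversely, suppose $\L$ is invertible in the tensor sense. By item (2) of that remark both structure maps are isomorphisms, so $\L \otimes \L^{\otimes -1} \cong \O_X$; restricting to an affine open $\Spec A$ yields $A$-modules $L,M$ with $L \otimes_A M \cong A$. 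The standard description of $\Pic(A)$ then forces $L$ to be finitely generated projective of rank $1$, i.e. $\L$ is locally free of rank $1$.

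For the second claim I would use that exactness of a sequence in $\Q(X)$ may be tested locally, and hence assume $X = \Spec A$ and $\L = \O_X$ after choosing a trivialization. Under such a trivialization each $s_i : \O_X \to \L$ becomes an element $s_i \in A$, the epimorphism condition on $s$ becomes $(s_0,\dotsc,s_n) = A$, and the intrinsically defined map $r$ becomes the Koszul differential $e_{ij} \mapsto s_i e_j - s_j e_i$. Thus, locally, $(*)$ is the truncation
\[A^{\binom{n+1}{2}} \xrightarrow{\;r\;} A^{n+1} \xrightarrow{\;s\;} A \to 0\]
of the Koszul complex $K_\bullet(s_0,\dotsc,s_n)$, and since $s$ is already epic it remains only to show $\ker(s) = \operatorname{im}(r)$, i.e. $H_1(K_\bullet) = 0$. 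This last vanishing is the classical fact that a Koszul complex on a sequence generating the unit ideal is acyclic: choosing $a_i \in A$ with $\sum_i a_i s_i = 1$, wedging with $\sum_i a_i e_i$ furnishes a contracting homotopy, so $K_\bullet$ is null-homotopic and in particular $H_1 = 0$. Assembling the local statements gives that $s$ is the cokernel of $r$ globally.

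I expect the only substantive inputs to be these two standard facts — that a module admitting a tensor inverse is finitely generated projective of rank $1$, and that the Koszul complex on a unit-ideal sequence is exact. The single point demanding care is the identification, after trivialization, of the abstractly defined $r$ (via the duality $\Hom(\L^{\otimes -1},\O_X) \cong \Hom(\O_X,\L)$ together with the prescribed entries $r_{i,j,i} = -s_j$, $r_{i,j,j} = s_i$) with the Koszul differential, including the signs and the twist by $\L^{\otimes -1}$; this bookkeeping is routine but is where an error would most easily creep in.
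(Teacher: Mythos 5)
Your argument is correct, but both halves diverge from the paper's own proof in instructive ways. For the characterization of invertible objects the paper does not cite the standard description of $\Pic(A)$ but reproves it from scratch: it writes a generator of $\L \otimes \L^{\otimes -1} \cong \O_X$ as a finite sum of pure tensors to get local finite generation, then uses $\L(x) \otimes_{\kappa(x)} \L^{\otimes -1}(x) \cong \kappa(x)$, Nakayama, and a torsion argument to produce local free generators; your citation is legitimate, but it black-boxes exactly this computation. For the exactness of $(*)$ the paper does not trivialize $\L$ on an arbitrary affine open: it covers $X$ by the opens $D(s_i)$ on which $s_i$ generates $\L$, normalizes $s_0 = 1$, and exhibits an explicit preimage $x_1 e_{0,1} + \dotsb + x_n e_{0,n}$ of any kernel element --- which is precisely your contracting homotopy $\omega \mapsto \bigl(\sum_i a_i e_i\bigr) \wedge \omega$ specialized to $a_0 = 1$ and $a_i = 0$ for $i > 0$. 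Your Koszul-complex route is more uniform (it works on any trivializing open, using only that $(s_0,\dotsc,s_n)$ is the unit ideal, rather than requiring some $s_i$ to be a unit) and places the sequence as the tail of a familiar null-homotopic complex, at the cost of the sign and twist bookkeeping you rightly flag as the delicate point; the paper's version trades that conceptual packaging for a two-line hands-on verification.
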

%\xymatrix{(\L^{\otimes -1})^{\binom{n+1}{2}} \ar[r] &  {\O_X}^{n+1} \ar[r] &  \L \ar[r] &  0}$

\begin{proof} If $\L$ is locally free of rank $1$, then it is clear that $\L$ is invertible. Now assume that $\L$ is invertible. Then also every restriction is invertible. Thus we can work locally on $X$. Write a generator of $\L \otimes \L^{\otimes -1} \cong \O_X$ as a finite sum of pure tensors. The sections appearing from $\L$ generate it locally. Thus $\L$ is locally of finite type. Now for every $x \in X$, we have $\L(x) \otimes_{\kappa(x)} \L^{\otimes -1}(x) \cong \kappa(x)$, where $\L(x) := \L_x \otimes_{\O_{X,x}} \kappa(x)$. Linear algebra tells us $\L(x) \cong \kappa(x)$. Nakayama's Lemma shows that $\L_x$ is generated by a single element. Since $\L$ is locally of finite type, this is true in an open neighborhood of $x$. The same works for $\L^{\otimes -1}$ instead of $\L$. The local generators cannot have any torsion since their tensor product generates $\L \otimes \L^{\otimes -1} \cong \O_X$. Thus they are free generators and we see that $\L$ is locally free of rank $1$.
 
In order to check the exactness of the sequence, remark that $X$ is covered by the open subsets $D(s_i)$ on which $s_i$ generates $\L$. Thus we may assume that $\L$ is free generated by $s_0$. In fact, we may assume $X = \Spec(S), \L = \widetilde{S}$ and $s_0 = 1$ and have to show the exactness of
\[\xymatrix@C=70pt{S^{\binom{n+1}{2}} \ar[r]^{e_{i,j} \mapsto s_i e_j - s_j e_i} &  S^{n+1} \ar[r]^{e_i \mapsto s_i} &  S \ar[r] &  0}\]
in $\M(S)$. It is a complex since $s_i s_j = s_j s_i$ holds in $S$. Now let $(x_0,\dotsc,x_n) \in S^{n+1}$ be in the kernel, i.e. $x_0 = - s_1 x_1 - \dotsc - s_n x_n$. Then
\[(x_0,\dotsc,x_n) = x_1 (-s_1,1,0,\dotsc) + \dotsc + x_n (-s_n,\dotsc,0,1)\]
is the image of $x_1 e_{0,1} + \dotsc + x_n e_{0,n}$. \end{proof}
 
We prepare us for the proof of Theorem \ref{unipr}. Fix a cocomplete $R$-linear tensor category $C$.
 
\begin{defi} Let $X \in C$ and $m \geq 0$. Then the symmetric group $\sym(m)$ acts on $X^{\otimes m}$. The object of coinvariants, that is the coequalizer of all the $m!$ maps $X^{\otimes m} \to X^{\otimes m}$ is denoted by $\Sym^m(X)$. \end{defi}
 
\begin{lemma} \label{symsum} For $X,Y \in C$ we have ${\Sym}^m(X \oplus Y) \cong \bigoplus_{p+q=m} {\Sym}^p(X) \otimes {\Sym}^q(Y)$. \end{lemma}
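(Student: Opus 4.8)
The plan is to turn every operation in sight into a colimit and then to use repeatedly that $\otimes$ is cocontinuous in each variable. Write $q := m - p$ throughout. First I would expand the tensor power by the categorified distributive law: letting $\phi$ range over functions $\{1,\dots,m\} \to \{X,Y\}$ and writing $Z_\phi$ for the ordered tensor product whose $i$-th factor is $\phi(i)$, we get
\[(X \oplus Y)^{\otimes m} \cong \bigoplus_{\phi} Z_\phi.\]
The symmetric group $\sym(m)$ acts on the left-hand side through the symmetry isomorphisms, and under this decomposition it permutes the summands, relabelling $\phi \mapsto \sigma\phi$ and identifying $Z_\phi$ with $Z_{\sigma\phi}$ by the corresponding coherence isomorphism.

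By construction $\Sym^m(X \oplus Y)$ is the colimit of this $\sym(m)$-object, i.e. the colimit of the diagram $B\sym(m) \to C$ determined by the action, where $B\sym(m)$ is the one-object groupoid with automorphism group $\sym(m)$. The key step is to recognise $\bigoplus_\phi Z_\phi$, with its $\sym(m)$-action, as the object induced from the action groupoid $\sym(m) \ltimes \{X,Y\}^{m}$ along its projection to $B\sym(m)$; since the colimit is itself a left Kan extension and left Kan extensions compose, the colimit over $B\sym(m)$ equals the colimit over the action groupoid of the functor $\phi \mapsto Z_\phi$. This action groupoid is the disjoint union of its connected components, one per orbit; the orbits are indexed by $p := |\phi^{-1}(X)| \in \{0,\dots,m\}$, a representative being $\phi_p$ with $Z_{\phi_p} = X^{\otimes p} \otimes Y^{\otimes q}$, and the corresponding component is equivalent to the delooping $B(\sym(p) \times \sym(q))$ of its stabiliser. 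As a colimit over a disjoint union of groupoids is the direct sum of the colimits over the pieces, this yields
\[\Sym^m(X \oplus Y) \cong \bigoplus_{p+q=m} \colim_{B(\sym(p) \times \sym(q))} \bigl(X^{\otimes p} \otimes Y^{\otimes q}\bigr).\]

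It then remains to identify each summand. On $X^{\otimes p} \otimes Y^{\otimes q}$ the stabiliser acts as the product of the $\sym(p)$-action on $X^{\otimes p}$ and the $\sym(q)$-action on $Y^{\otimes q}$, so its colimit factors as an iterated colimit over $B\sym(p) \times B\sym(q)$. Computing the inner colimit over $B\sym(q)$ and using that $X^{\otimes p} \otimes (-)$ is cocontinuous gives $X^{\otimes p} \otimes \Sym^q(Y)$; the remaining colimit over $B\sym(p)$, together with cocontinuity of $(-) \otimes \Sym^q(Y)$, gives $\Sym^p(X) \otimes \Sym^q(Y)$. Substituting into the previous display proves the claim.

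I expect the genuine work to be concentrated in the middle paragraph, namely the passage from the colimit over $B\sym(m)$ of the permutation sum to the orbit-wise sum of stabiliser-coinvariants; this is an induction-/Mackey-type bookkeeping step, and the groupoid reformulation is the cleanest way to organise it. The one point that must be verified honestly is that the coherence isomorphisms identifying the summands $Z_\phi \cong Z_{\sigma\phi}$ restrict, on each stabiliser $\sym(p) \times \sym(q)$, to exactly the symmetry action defining $\Sym^p(X)$ and $\Sym^q(Y)$; this is guaranteed by Mac Lane's coherence theorem for symmetric monoidal categories.
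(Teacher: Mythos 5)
Your proof is correct and is essentially the paper's argument in different clothing: the paper likewise decomposes $(X \oplus Y)^{\otimes m}$ by distributivity, groups the summands into $\sym(m)$-orbits indexed by $p+q=m$ with stabilizers $\sym(p)\times\sym(q)$, and identifies the coinvariants by testing against an arbitrary object $Z$ and invoking Yoneda --- which is exactly your action-groupoid/Kan-extension computation rephrased in terms of the functor of points. The differences are purely presentational (your version makes the orbit/stabilizer bookkeeping and the coherence check explicit, the paper's is terser), but both rest on the same decomposition and on cocontinuity of $\otimes$ for the final identification.
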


\begin{proof} Let $Z \in C$. A morphism ${\Sym}^m(X \oplus Y) \to Z$ corresponds to a morphism on $(X \oplus Y)^{\otimes m}$, which is invariant under permutation of the factors. Now $(X \oplus Y)^{\otimes m}$ is the direct sum of all $m$-fold tensor products consisting of $X$ or $Y$. Concerning invariant morphisms on such a direct sum, we only have to consider the summands $X^{\otimes p} \otimes Y^{\otimes q}$ with $p+q=n$ and ensure invariance separately on $X^{\otimes p}$ and $Y^{\otimes q}$. Thus we are left with a morphism $\bigoplus_{p+q=m} {\Sym}^p(X) \otimes {\Sym}^q(Y) \to Z$. By Yoneda's Lemma, we are done. \end{proof}

\begin{lemma} \label{symex} For every cokernel diagram
\[\xymatrix{X \ar[r] &  Y \ar[r]&  Z \ar[r]&  0}\]
we get a cokernel diagram
\[\xymatrix{X \otimes {\Sym}^{m-1}(Y)  \ar[r] & {\Sym}^m(Y) \ar[r] & {\Sym}^m(Z) \ar[r]&  0.}\]
\end{lemma}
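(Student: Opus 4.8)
The plan is to verify the claimed cokernel diagram through its universal property, testing against an arbitrary object $W \in C$. Recall that a diagram $A \to B \to C \to 0$ is a cokernel diagram precisely when, for every $W$, precomposition identifies $\Hom(C,W)$ injectively with the set of morphisms $g : B \to W$ whose composite $A \to B \to W$ vanishes. So I want to show that morphisms $\Sym^m(Y) \to W$ which kill the map $X \otimes \Sym^{m-1}(Y) \to \Sym^m(Y)$ correspond bijectively, via the map induced by $Y \to Z$, to morphisms $\Sym^m(Z) \to W$.

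First I would unwind both sides into $\sym(m)$-invariant morphisms out of tensor powers: by definition $\Hom(\Sym^m(Y), W)$ is the set of invariant morphisms $Y^{\otimes m} \to W$, and likewise for $Z$. Next, using that the tensor product is cocontinuous in each variable, I would compute $\Hom(Z^{\otimes m}, W)$. Applying cocontinuity slot by slot to the cokernel diagram $X \to Y \to Z \to 0$ shows that $Y^{\otimes m} \to Z^{\otimes m}$ is an epimorphism and that a morphism $Y^{\otimes m} \to W$ factors, necessarily uniquely, through $Z^{\otimes m}$ if and only if it vanishes after inserting $f : X \to Y$ into each of the $m$ tensor positions, i.e. on the composite $X \otimes Y^{\otimes(m-1)} \to Y^{\otimes m}$ for every slot. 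Since $Y^{\otimes m} \to Z^{\otimes m}$ is moreover $\sym(m)$-equivariant, invariant morphisms on $Z^{\otimes m}$ correspond exactly to invariant morphisms on $Y^{\otimes m}$ that factor through it. Hence $\Hom(\Sym^m(Z), W)$ is identified with the invariant morphisms $Y^{\otimes m} \to W$ vanishing on $f$ in every slot.

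The key simplification is then that, for an invariant morphism $\psi : Y^{\otimes m} \to W$, the conditions ``vanishes on $f$ in slot $i$'' coincide for all $i$: inserting $f$ in slot $i$ differs from inserting it in slot $1$ by a permutation of the target together with a reindexing of the source, so invariance of $\psi$ makes the $m$ conditions equivalent to the single requirement that $\psi$ kills the map $X \otimes Y^{\otimes(m-1)} \to Y^{\otimes m}$ placing $f$ in the first slot. Because the projection $X \otimes Y^{\otimes(m-1)} \to X \otimes \Sym^{m-1}(Y)$ is an epimorphism (the tensor product being cocontinuous), this last condition is exactly the requirement that the induced $\phi : \Sym^m(Y) \to W$ kill $X \otimes \Sym^{m-1}(Y) \to \Sym^m(Y)$. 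Assembling these identifications yields the desired bijection, and Yoneda's Lemma finishes the proof.

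I expect the main obstacle to be the bookkeeping in the middle step: tracking the $\sym(m)$-equivariance of $Y^{\otimes m} \to Z^{\otimes m}$ and justifying, by repeated use of cocontinuity of $\otimes$ in each variable, that factoring through $Z^{\otimes m}$ is detected precisely by vanishing on $f$ in each slot. The reduction from ``all slots'' to ``one slot'' is where invariance is essential, and it is also what makes the single object $X \otimes \Sym^{m-1}(Y)$, rather than a sum indexed by the slots, suffice on the left-hand side.
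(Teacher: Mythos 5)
Your proposal is correct and follows essentially the same route as the paper: test the universal property against an arbitrary object, identify morphisms out of $\Sym^m(Z)$ with $\sym(m)$-invariant morphisms out of $Z^{\otimes m}$, use cocontinuity of $\otimes$ slot by slot to characterize these as invariant morphisms on $Y^{\otimes m}$ vanishing on $f$ inserted in each factor, and use invariance to collapse the $m$ conditions to the single slot giving $X \otimes \Sym^{m-1}(Y)$. Your version spells out a couple of justifications the paper leaves implicit (the epimorphism $X \otimes Y^{\otimes(m-1)} \to X \otimes \Sym^{m-1}(Y)$, the equivariance of $Y^{\otimes m} \to Z^{\otimes m}$), but the argument is the same.
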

 
Here the left morphism is induced by $X \otimes Y^{\otimes (m-1)} \to Y^{\otimes m}$.
 
\begin{proof} Let $T \in C$. Then morphisms $\Sym^m(Z) \to T$ correspond to invariant morphisms $Z^{\otimes m} \to T$. But morphisms $Z^{\otimes m} \to T$ correspond to morphisms $Y^{\otimes m} \to T$ with the property that for every $1 \leq i \leq m$ the composition with $Y \otimes \dotsc \otimes X \otimes \dotsc \otimes Y \to Y^{\otimes m}$ is trivial, where $X \to Y$ is the $i$th factor. This identification respects invariance of the morphisms, since $Y \to Z$ and therefore $Y^{\otimes m} \to Z^{\otimes m}$ is an epimorphism. But then invariance reduces the condition to the special case $i=1$. Thus we are left with a morphism $\Sym^m(Y) \to T$ which vanishes on $X \otimes {\Sym}^{m-1}(Y)$. \end{proof}
 
\begin{nota} In the following let $S = R[x_0,\dotsc,x_n]$ as graded $R$-algebra. Denote by $H_m$ the set of (monic) monomials of degree $m$ in $x_0,\dotsc,x_n$. Thus $H_m$ is a basis of the $R$-module $S_m$ of homogeneous polynomials of degree $m$. For $m<0$ we have $H_m = \emptyset$. \end{nota}
  
\begin{lemma} \label{seq} Given a pair $(\L,s)$, where $\L \in C$ is invertible and $s : 1^{n+1} \to \L$ is a morphism, such that
\[\xymatrix{(\L^{\otimes -1})^{\binom{n+1}{2}} \ar[r] &  1^{n+1} \ar[r] &  \L \ar[r] &  0}\]
is a cokernel diagram. Then for all $m \geq 0$ the same is true for
\[\xymatrix{(\L^{\otimes -1})^{H_{m-1} {\binom{n+1}{2}}} \ar[r] &  1^{H_m} \ar[r] &  \L^{\otimes m} \ar[r] &  0.}\]
\end{lemma}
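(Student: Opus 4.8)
The plan is to produce the degree-$m$ sequence by applying the symmetric-power functor $\Sym^m$ to the hypothesised cokernel diagram and then identifying the three resulting terms. Concretely, for fixed $m \geq 1$ I would invoke Lemma~\ref{symex} with $X = (\L^{\otimes -1})^{\binom{n+1}{2}}$, $Y = 1^{n+1}$ and $Z = \L$, which yields a cokernel diagram
\[
(\L^{\otimes -1})^{\binom{n+1}{2}} \otimes \Sym^{m-1}(1^{n+1}) \longrightarrow \Sym^m(1^{n+1}) \longrightarrow \Sym^m(\L) \longrightarrow 0.
\]
The case $m=0$ is handled separately and trivially: the three terms then collapse to $0$, $1$ and $1 = \L^{\otimes 0}$, with the middle arrow the identity. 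So the entire task reduces to computing these symmetric powers.

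First I would check that $\Sym^m(\L) \cong \L^{\otimes m}$. Since $\L$ is invertible, the symmetry $\L \otimes \L \cong \L \otimes \L$ is the identity; because the adjacent transpositions generate $\sym(m)$, the whole action of $\sym(m)$ on $\L^{\otimes m}$ is then trivial, and the coequalizer of the $m!$ identity maps is $\L^{\otimes m}$ itself. The unit $1$ is invertible with trivial self-symmetry by coherence, so the same argument gives $\Sym^p(1) \cong 1^{\otimes p} \cong 1$ for every $p \geq 0$.

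Next I would compute the middle term by iterating Lemma~\ref{symsum} over the $n+1$ summands of $1^{n+1}$:
\[
\Sym^m(1^{n+1}) \cong \bigoplus_{p_0 + \dotsb + p_n = m} \Sym^{p_0}(1) \otimes \dotsb \otimes \Sym^{p_n}(1) \cong 1^{H_m},
\]
where the final identification uses $\Sym^{p}(1) \cong 1$ together with the bijection $(p_0,\dotsc,p_n) \mapsto x_0^{p_0} \dotsm x_n^{p_n}$ between exponent tuples summing to $m$ and the monomial basis $H_m$. Inserting this together with its degree $(m-1)$ analogue, the left-hand term becomes $(\L^{\otimes -1})^{\binom{n+1}{2}} \otimes 1^{H_{m-1}} \cong (\L^{\otimes -1})^{H_{m-1}\binom{n+1}{2}}$, and the sequence acquires exactly the shape claimed in the lemma.

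The one point that needs genuine care — and which I expect to be the main obstacle — is that these identifications be natural enough that the induced arrows coincide with the intended ones. The middle map is induced by $s^{\otimes m}$, so under the monomial indexing it must send the basis vector of $x^\alpha = x_0^{p_0}\dotsm x_n^{p_n}$ to the product of sections $s_0^{p_0}\dotsm s_n^{p_n} : 1 \to \L^{\otimes m}$, while the left-hand map (induced by $X \otimes Y^{\otimes(m-1)} \to Y^{\otimes m}$) must reproduce the degree-$m$ syzygies $s_i e_j - s_j e_i$ multiplied by monomials of degree $m-1$. Verifying this amounts to tracking how the coinvariant quotients interact with the decomposition of Lemma~\ref{symsum} and with $s^{\otimes m}$; the abstract right-exactness that makes the sequence a cokernel diagram is already supplied by Lemma~\ref{symex}, so no further homological input is required.
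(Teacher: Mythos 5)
Your proposal is correct and follows exactly the paper's own argument: apply Lemma~\ref{symex} to the given cokernel diagram, identify $\Sym^m(\L)$ with $\L^{\otimes m}$ via the trivial symmetry of an invertible object, and identify $\Sym^m(1^{n+1})$ with $1^{H_m}$ by iterating Lemma~\ref{symsum}. The only difference is that you spell out the $m=0$ case and the compatibility of the induced arrows with the monomial indexing, which the paper leaves implicit.
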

 
Here the middle morphism is given by $e_p \mapsto p[x_i \mapsto s_i]$ for $p \in H_m$ (suggestive notation), i.e. we replace $x_i$ by $s_i$ and products by tensor products in the polynomial $p$. The left morphism is given by $e_{q,i,j} \mapsto s_i e_{x_j q} - s_j e_{x_i q}$, where $q \in H_{m-1}$ und $0 \leq i < j \leq n$.
 
\begin{proof} According to our definition of an invertible object, $\sym(m)$ acts trivially on $\L^{\otimes m}$, thus $\Sym^m(\L) = \L^{\otimes m}$. The generalization of Lemma \ref{symsum} to finitely many summands yields  $\Sym^m(1^{n+1}) \cong 1^{H_m}$; the same for $m-1$. Now the claim follows from Lemma \ref{symex}. \end{proof}
 
\begin{lemma} \label{presab}  Let $a,d \in \mathbb{Z}$ with $a+d \geq 0$. Then we have an exact sequence of graded $S$-modules
\[\xymatrix{S(-a-1)^{H_{a+d-1} \binom{n+1}{2}} \ar[r] & S(-a)^{H_{a+d}} \ar[r] & S(d)_{\geq a} \ar[r] & 0.}\]
\end{lemma}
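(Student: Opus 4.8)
The plan is to prove exactness directly, reducing it to a combinatorial statement about the syzygies of the monomials in $H_{a+d}$. Write $\mu \colon S(-a)^{H_{a+d}} \to S(d)_{\geq a}$ for the middle map $e_p \mapsto p$ and $r$ for the left map $e_{q,i,j} \mapsto x_i\, e_{x_j q} - x_j\, e_{x_i q}$. First I would record the two easy facts. The composite $\mu \circ r$ vanishes because $x_i(x_j q) - x_j(x_i q) = 0$, so we have a complex. Surjectivity of $\mu$ holds since in degree $h$ the target is $S_{h+d}$ for $h \geq a$ and $0$ for $h<a$, and every monomial of degree $h+d$ factors as a monomial of degree $h-a$ times one of degree $a+d$, using $a+d \geq 0$ and $h-a \geq 0$. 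It then remains to show $\ker \mu \subseteq \operatorname{im} r$.

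Fix a degree $h$; I may assume $h \geq a+1$, since for $h < a$ the source vanishes, while at $h=a$ the source has degree-$a$ part $R^{H_{a+d}}$ mapping the basis $\{e_p\}$ isomorphically onto the monomial basis $H_{a+d}$ of $S_{a+d}$, so $\ker \mu = 0$ there. An element of $\ker\mu$ in degree $h$ is a family $(f_p)_{p \in H_{a+d}}$ with $f_p \in S_{h-a}$ and $\sum_p f_p\, p = 0$. Since $S$ is $R$-free on monomials, I would expand each $f_p$ in monomials and regroup the syzygy according to the product monomial $v$ (of degree $h+d$) formed by a term of $f_p$ times $p$. This writes the syzygy as a finite sum of pieces $\sigma_v = \sum_{p \mid v,\ \deg p = a+d} c_p\,(v/p)\, e_p$ with $c_p \in R$ and $\sum_p c_p = 0$, one for each monomial $v$ of degree $h+d$. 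It thus suffices to show each $\sigma_v \in \operatorname{im} r$.

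For fixed $v$, I would consider the graph $G_v$ whose vertices are the degree-$(a+d)$ divisors of $v$ and whose edges join $p = x_j q$ to $p' = x_i q$ (for a degree-$(a+d-1)$ monomial $q$ and $i<j$). Such an edge is precisely the image under $r$ of the generator $e_{q,i,j}$ multiplied by the monomial $v/(x_i x_j q)$, and it equals $(v/p)\,e_p - (v/p')\,e_{p'}$. The key point is that $G_v$ is connected: given two degree-$(a+d)$ divisors of $v$ with distinct exponent vectors $\alpha, \beta$, one finds indices $i,j$ with $\alpha_i > \beta_i$ and $\alpha_j < \beta_j$; lowering $\alpha_i$ by one and raising $\alpha_j$ by one again yields a divisor of $v$ (the new exponents still lie between $0$ and those of $v$) while strictly decreasing $\|\alpha - \beta\|_1$, so induction connects any two vertices. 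Because every sum-zero combination $\sum_p c_p\,(v/p)\, e_p$ over a connected graph is an $R$-combination of the edge differences $(v/p)e_p - (v/p')e_{p'}$, each $\sigma_v$ lies in $\operatorname{im} r$, completing the proof.

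I expect the connectivity of this exchange graph $G_v$ to be the crux: it is exactly what forces the elementary relations $x_i e_{x_j q} - x_j e_{x_i q}$, rather than all pairwise Taylor syzygies among the monomial generators of the ideal $\mathfrak{m}^{a+d} = (x_0,\dots,x_n)^{a+d}$, to generate the full first syzygy module. Everything else is bookkeeping with gradings, the twists and the shift in $S(d)_{\geq a}$ merely placing $S_{a+d}, S_{a+d+1}, \dots$ in degrees $a, a+1, \dots$, together with the observation that the argument is valid over an arbitrary commutative ring $R$, since the path-based spanning of the sum-zero subspace uses only $\pm 1$ coefficients.
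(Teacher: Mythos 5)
Your proof is correct, but it takes a genuinely different route from the paper's. You compute $\ker\mu$ directly: after disposing of the trivial degrees, you use the multigrading to split an arbitrary syzygy into pieces $\sigma_v$ supported on the degree-$(a+d)$ divisors of a single monomial $v$, and reduce to the connectivity of the exchange graph $G_v$, proved by descent on $\|\alpha-\beta\|_1$. The steps check out; in particular an edge of $G_v$ really is a monomial multiple of some $r(e_{q,i,j})$, since $x_iq\mid v$ and $x_jq\mid v$ with $i\neq j$ force $x_ix_jq\mid v$, and the telescoping along a path only needs coefficients $\pm1$, so the argument works over any commutative $R$. The paper instead proves the dual statement: for every graded $S$-module $M$ the sequence becomes left exact after applying $\Hom(-,M)$, i.e.\ any family $(m_p)_{p\in H_{a+d}}$ in $M_a$ satisfying $x_i m_{x_jq}=x_j m_{x_iq}$ extends uniquely to a graded $S$-linear map $\phi:S(d)_{\geq a}\to M$; the extension is built degree by degree by peeling off the smallest variable occurring in each monomial, with an induction establishing well-definedness whose combinatorial core is essentially your connectivity claim in disguise. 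Your version makes that core explicit and isolates exactly why the Koszul-type relations generate the full first syzygy module of $\mathfrak{m}^{a+d}$, while the paper's version never analyzes the kernel at all and is phrased from the start in the $\Hom(-,M)$ form that is what actually gets used (a cokernel diagram is detected by mapping out). Both are valid for arbitrary commutative $R$.
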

 
The middle morphism is defined by $e_p \mapsto p$ and the left one by $e_{q,i,j} \mapsto x_i e_{x_j q} - x_j e_{x_i q}$, where $0 \leq i<j \leq n$ and $q \in H_{a+d-1}$.

\begin{proof} For a graded $S$-module $M$ we have to prove the exactness of
\[\xymatrix@C=10pt{0 \ar[r] & \Hom(S(d)_{\geq a},M) \ar[r] & \Hom(S(-a)^{H_{a+d}},M) \ar[r] & \Hom(S(-a-1)^{H_{a+d-1} \binom{n+1}{2}},M).}\]
The map on the right identifies with
\[{M_a}^{H_{a+d}} \to {M_{a+1}}^{H_{a+d-1} \binom{n+1}{2}} ~,~ (m_p)_{p \in H_{a+d}} \mapsto (x_i m_{x_j q} - x_j m_{x_i q})_{q,i,j}\]
and the left one maps $\phi : S(d)_{\geq a} \to M$ to $(\phi(p))_{p \in H_{a+d}}$. Thus we have to show the following: For every tuple $(m_p)$ with $x_i m_{x_j q} = x_j m_{x_i q}$ there is a unique $S$-linear graded homomorphism $\phi : S(d)_{\geq a} \to M$ with $\phi(p)=m_p$ for all $p$.
 
We construct the $R$-linear maps $\phi_k : (S(d)_{\geq a})_k \to M_k$ recursively: For $k<a$ we have $\phi_k=0$. Define $\phi_a : S_{a+d} \to M_a$ to be linear extension of $p \mapsto m_p$ for $p \in H_{a+d}$. Now assume $k \geq a$ and $\phi_k$ is already defined. Define $\phi_{k+1} : S_{k+d+1} \to M_{k+1}$ to be the linear extension of the following map $H_{k+d+1} \to M_{k+1}$: Every monomial of degree $k+d+1$ may be written as $x_s^{v_s} \cdots x_n^{v_n}$ for some unique $0 \leq s \leq n$ with $v_s \geq 1$, since otherwise the degree would be $0=k+d+1 \geq a+d+1 \geq 1$. Now let
\[\phi_{k+1}(x_s^{v_s} \cdots x_n^{v_n}) := x_s \phi_k(x_s^{v_s-1} \cdots x_n^{v_n}).\]
Then $\phi = \oplus_k \phi_k$ is $R$-linear and satisfies $\phi(p)=m_p$ for all $p \in H_{a+d}$. In order to show that $\phi$ is $S$-linear, it suffices to prove
\[\phi_{k+1}(x_i q) = x_i \phi_k(q)\]
for all $k \geq a$, $0 \leq i \leq n$ and $q \in H_{k+d}$. We use induction on $k$. Write as above $q = x_s^{v_s} \cdots x_n^{v_n}$ with $v_s \geq 1$. For $i \leq s$ the equation follows immediately from the definitions. For $i \geq s$, we have
\[\phi_{k+1}(x_i q) = x_s \phi_k(x_i x_s^{v_s-1} \cdots x_n^{v_n}).\]
Assuming the claim holds for $k-1$ and $k > a$, we get, as desired,
\[x_s \phi_k(x_i x_s^{v_s-1} \cdots x_n^{v_n}) = x_s x_i \phi_k(x_s^{v_s-1} \cdots x_n^{v_n}) = x_i \phi_k(q).\]
The base case $k=a$ follows from the relations of the elements $m_p \in M_a$:
\[x_s \phi_a(x_i x_s^{v_s-1} \cdots x_n^{v_n}) = x_s m_{x_i x_s^{v_s-1} \cdots x_n^{v_n}} = x_i m_q = x_i \phi_a(q).\]
The uniqueness of $\phi$ is clear: Every step of the definition above was \emph{forced} by the condition that $\phi$ is $S$-linear and maps $p \mapsto m_p$. \end{proof}
 
\begin{lemma} \label{ind} Let $X$ be a concentrated (i.e. quasi-compact and quasi-separated) $R$-scheme. Denote by $\Q_f(X)$ the category of quasi-coherent sheaves of finite presentation. Then for every cocomplete $R$-linear category $C$, we have an equivalence of categories
\[\Hom_{c\otimes/R}(\Q(X),C) \cong \Hom_{fc\otimes/R}(\Q_f(X),C), ~ F \mapsto F|_{Q_f(X)}.\]
\end{lemma}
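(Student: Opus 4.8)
The plan is to present $\Q(X)$ as a purely formal completion of $\Q_f(X)$ and then read off the statement from the universal property of that completion. The single geometric input I would use is that, for a concentrated scheme $X$, the category $\Q(X)$ is the \emph{filtered} cocompletion of $\Q_f(X)$; that is, $\Q(X) \cong \Ind(\Q_f(X))$, with the (essentially small) category $\Q_f(X)$ sitting inside as the full subcategory of finitely presentable objects. Concretely this packages two facts: every quasi-coherent sheaf is a filtered colimit of sheaves of finite presentation, and each sheaf $M$ of finite presentation is finitely presentable, i.e. $\Hom(M,-)$ preserves filtered colimits. Both hold precisely because $X$ is quasi-compact and quasi-separated (the standard structure theory of quasi-coherent sheaves, e.g. EGA I, 6.9.12). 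I would also record that $\Q_f(X)$ is closed in $\Q(X)$ under finite colimits (finite direct sums, and cokernels of maps of finitely presented sheaves, are again of finite presentation) and under $\otimes_X$, and contains the unit $\O_X$; thus it is itself a small, right-exact, $R$-linear tensor category.

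First I would dispatch the underlying $R$-linear (non-tensor) statement. For a small $R$-linear category $\A$ with finite colimits and a cocomplete $R$-linear $D$, restriction along $\A \hookrightarrow \Ind(\A)$ induces an equivalence between cocontinuous functors $\Ind(\A) \to D$ and finite-colimit-preserving functors $\A \to D$ (this is the condition encoded by the subscript $fc$). This is standard: finite colimits computed in $\A$ agree with their images in $\Ind(\A)$, so a cocontinuous functor restricts to a right-exact one; conversely a right-exact $F_0 : \A \to D$ extends, by the universal property of the $\Ind$-completion, to a unique filtered-colimit-preserving $F : \Ind(\A) \to D$, and $F$ is in fact fully cocontinuous because an arbitrary colimit in $\Ind(\A)$ can be rewritten using only finite colimits and filtered colimits, each of which $F$ preserves. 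Applying this with $\A = \Q_f(X)$ and $D = C$ gives the equivalence already at the level of plain functors and natural transformations.

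It remains to match the tensor structures. Since $\otimes_X$ is cocontinuous in each variable, for filtered presentations $M = \colim_i M_i$ and $N = \colim_j N_j$ with $M_i,N_j \in \Q_f(X)$ one has $M \otimes_X N \cong \colim_{i,j} M_i \otimes_X N_j$, so the monoidal structure of $\Q(X)$ is determined by its restriction to $\Q_f(X)$. Hence a cocontinuous tensor functor restricts to a finite-colimit-preserving tensor functor on $\Q_f(X)$, and conversely the cocontinuous extension $F$ of such a functor $F_0$ acquires tensor constraints from
\[F(M) \otimes F(N) \cong \colim_{i,j} F(M_i) \otimes F(N_j) \cong \colim_{i,j} F(M_i \otimes_X N_j) \cong F(M \otimes_X N),\]
using cocontinuity of $\otimes$ on both sides together with the constraints of $F_0$; the unit constraint comes from $\O_X \in \Q_f(X)$, and the associativity, unit and symmetry coherence diagrams follow from those of $F_0$ by a routine density argument. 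The same density argument identifies tensor natural transformations on $\Q(X)$ with those on $\Q_f(X)$, which upgrades the equivalence of the previous paragraph to the asserted equivalence of categories.

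The main obstacle is the very first step: establishing $\Q(X) \cong \Ind(\Q_f(X))$, and in particular that every sheaf of finite presentation is a finitely presentable object of $\Q(X)$ and that these generate under filtered colimits. This is the only place where the hypothesis that $X$ is concentrated is used, and it is where one must appeal to the structure theory of quasi-coherent sheaves rather than to formal category theory. Once it is in hand, everything else is the universal property of the $\Ind$-completion together with bookkeeping for the monoidal structure and its coherences.
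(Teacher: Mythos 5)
Your proof is correct and follows essentially the same route as the paper: both extend a right-exact tensor functor on $\Q_f(X)$ by a left Kan extension along $\Q_f(X) \hookrightarrow \Q(X)$, using that on a concentrated scheme every quasi-coherent sheaf (and every morphism) is a filtered colimit of finitely presented ones. The only difference is packaging — you invoke the universal property of $\Ind(\Q_f(X))$ where the paper writes the comma-category colimit $F(M) = \colim_{N \to M} G(N)$ explicitly and checks cocontinuity and the tensor constraints by hand.
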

Here $\Hom_{fc\otimes/R}$ means the category of $R$-linear tensor functors which are finitely cocontinuous, or equivalently, preserve cokernels (since direct sums are biproducts and preserved due to linearity).
 
\begin{proof} The idea is that every quasi-coherent sheaf is the colimit of all quasi-coherent sheaves of finite presentation mapping into it. Thus, for $G \in \Hom_{fc\otimes}(\Q_f(X),C)$ we have to define its extension $F \in \Hom_{c\otimes}(\Q(X),C)$ by
\[F(M) = \colim_{N \to M, N \in \Q_f(X)} G(N).\]
This works since $\Q_f(X)$ is essentially small. The action of $F$ on morphisms is clear. Now it is easy to see that $F$ is $R$-linear, extends $G$ and preserves directed unions of submodules. Hence, $F$ preserves arbitrary direct sums. To see that $F$ preserves cokernels, it is enough to write every homomorphism $M \to M'$ in $\Q(X)$ as a directed colimit of homomorphisms $N_i \to N'_i$, where $N_i,N'_i \in \Q_f(X)$, and then use that $G$ preserves their cokernels. But this follows from (\cite{KaSh}, 6.4.4). Thus $F$ is cocontinuous. We have $F(\O_X)=G(\O_X) \cong 1_C$. For $M,M' \in \Q(X)$, there is a canonical homomorphism $F(M) \otimes F(M') \to F(M \otimes M')$ extending the maps $G(N) \otimes G(N') = G(N \otimes N') \to G(M \otimes M')$ for $N \to M, N' \to M$ of finite presentation. Since it is natural in both $M$ and $M'$ and $F$ is cocontinuous, we may reduce to the case that $M,M'$ are of finite presentation in order to show that it is an isomorphism, in which case it is clear.\end{proof}

Finally:

\begin{proof}[Proof of Theorem \ref{unipr}] First assume that $R$ is a field. In this case Serre determined the category of \emph{coherent} sheaves on $\P^n_R$: Let $D$ be the category consisting of finitely generated graded $S$-modules and equivalence classes of homomorphisms of graded modules; here two homomorphisms are identified if they equal in sufficiently high degrees. Then there is an equivalence $D \cong \C(\P^n_R)$, which associates to every graded module $M$ its associated sheaf $\widetilde{M}$ (\cite{Ser}, III.3, 65, Prop. 5 and Prop. 6). This can be generalized to noetherian rings $R$ (\cite{EGAIII}, Scholie (2.3.3)).
 
Thus there is an equivalence between $\Hom_{fc\otimes}(\C(\P^n_R),C)$ and
\[\{F \in \Hom_{fc\otimes}(\grM_f(S),C) : F(i) \text{ is an isom. for all } i : M_{\geq a} \subseteq M\}.\]
Here $\grM_f(S)$ is the category of finitely generated graded $S$-modules. Now using some Lemma \ref{ind} and the corresponding result for graded modules, we see that $\Hom_{c\otimes}(\Q(\P^n_R),C)$ is equivalent to
\[\{F \in \Hom_{c\otimes}(\grM(S),C) : F(i) \text{ is an isom. for all } i : M_{\geq a} \subseteq M \in \grM_f(S)\}.\]
Since every $M \in \grM_f(S)$ is the cokernel of a homomorphism between finite direct sums of twists $S(d)$, the condition on $F$ reduces to: For all $a,d \in \mathbb{Z}$ the inclusion $S(d)_{\geq a} \subseteq S(d)$ is mapped to an isomorphism. For $a+d < 0$ this is trivial. So let $a+d \geq 0$. Using Lemma \ref{presab} for $\L := F(S(1))$, we see that the condition says that
\[\xymatrix{(\L^{\otimes (-a-1)})^{H_{a+d-1} \binom{n+1}{2}} \ar[r] & (\L^{\otimes -a})^{H_{a+d}} \ar[r] & \L^{\otimes d} \ar[r] & 0}\]
is a cokernel diagram. Let es denote this sequence by $P(a,d)$. Then $P(a,d)$ is just $P(0,a+d)$ tensored with $\L^{\otimes -a}$. Besides, $P(0,1)$ is the already known sequence
\[\xymatrix{(\L^{\otimes -1})^{\binom{n+1}{2}} \ar[r] &  1^{n+1} \ar[r] &  \L \ar[r] &  0}\]
and $P(0,m)$ for $m \geq 0$ is the implied sequence from Lemma \ref{seq}. Thus the condition on $F$ just says that $P(0,1)$ is a cokernel diagram. The universal property of $\grM(S)$ (Corollary \ref{unigr}) now finishes the proof. \end{proof}
 
Now we want to get rid of the assumption that $R$ is noetherian. In fact, by a base change argument, we will reduce to the noetherian ring $\mathds{Z}$.
 
Let $R \to S$ be a homomorphism of rings. Then there is a forgetful $2$-functor from $S$-linear cocomplete tensor categories to $R$-linear ones, and this has a $2$-left adjoint: Let $C$ be a $R$-linear cocomplete tensor category. Define $C_S$ to be the category of $S$-left modules in $C$, that is $X \in C$ together with a homomorphism of $R$-algebras $S \to \End(X)$. This is $S$-linear. For $X,Y \in C_S$ define $X \otimes_S Y$ to be the quotient of $X \otimes Y$ which identifies the two actions of $S$. The forgetful functor $C_S \to C$ creates colimits and has a left adjoint $p : C \to C_S, X \mapsto S \otimes_R X$. Put $1_{C_S} := p(1_C)$. Then it can be shown that $C_S$ is a $S$-linear cocomplete tensor category and $p$ is a cocontinuous $R$-linear tensor functor, which is universal in the following sense:
 
\begin{prop} For every cocomplete $S$-linear tensor category, $p$ induces an equivalence of categories $\Hom_{c\otimes/S}(C_S,D) \cong \Hom_{c\otimes/R}(C,D)$. \end{prop}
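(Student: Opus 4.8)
The plan is to realise the functor $G \mapsto G\circ p$ as one half of an adjoint equivalence, producing an explicit quasi-inverse by ``extension of scalars'' along the $S$-action on the unit $1_D$. First I would check that the forward assignment is well defined: since $p$ is a cocontinuous $R$-linear tensor functor (granted in the discussion preceding the statement) and every object of $\Hom_{c\otimes/S}(C_S,D)$ is in particular $R$-linear cocontinuous tensor, the composite $G\circ p$ lies in $\Hom_{c\otimes/R}(C,D)$; tensor natural transformations restrict likewise. This gives a functor $\Phi:\Hom_{c\otimes/S}(C_S,D)\to\Hom_{c\otimes/R}(C,D)$.

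To construct the inverse $\Psi$, I would use that $C_S$ is the Eilenberg--Moore category of the monad $T=S\otimes_R(-)=Up$ on $C$: an object is an $X\in C$ with an action $a:S\otimes_R X\to X$ (equivalently an $R$-algebra map $S\to\End(X)$), and $p$ is the free functor. Given $F\in\Hom_{c\otimes/R}(C,D)$, the $S$-linearity of $D$ supplies $s:S\to\End(1_D)$, and since $F$ is $R$-linear and cocontinuous one has a natural isomorphism $F(S\otimes_R X)\cong S\otimes_R F(X)$ (tensoring an object by an $R$-module is a colimit construction, preserved by $F$). For a module $(X,a)$ I would then define $\Psi(F)(X,a)$ to be the coequalizer in $D$
\[\xymatrix@C=40pt{S\otimes_R F(X) \ar@<2pt>[r]^-{F(a)} \ar@<-2pt>[r]_-{s\cdot} & F(X)}\]
of $F(a)$ and the intrinsic $S$-action on $F(X)$; this is exactly the analogue of the paper's $\otimes_S$, namely the quotient of $F(X)$ identifying the two actions of $S$, equivalently the base change of the $F(p1_C)=S\otimes_R 1_D$-module $F(X)$ along the algebra map $F(p1_C)\to 1_D$ coming from $s$.

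Next I would verify that $\Psi(F)$ is an $S$-linear cocontinuous tensor functor: $S$-linearity is built into the target (the surviving action is the one from $1_D$); cocontinuity follows because $U$ creates colimits and the above is a reflexive coequalizer, together with cocontinuity of $F$; and the tensor structure is obtained from the isomorphism $\Psi(F)(M)\otimes\Psi(F)(M')\cong\Psi(F)(M\otimes_S M')$, which results from $F$ being a tensor functor combined with the fact that $\otimes_S$ is itself the coequalizer identifying the two $S$-actions, so the two presentations match. On free modules one gets $\Psi(F)(pX)\cong F(X)$ naturally and compatibly with unit and tensor products (a free $F(p1_C)$-module base-changes to $F(X)$), which yields $\Phi\Psi\cong\mathrm{id}$.

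Finally, for the other composite, given $G:C_S\to D$ and $F:=G\circ p$, I would invoke that every $M\in C_S$ is canonically a reflexive coequalizer of free modules, $pUpUM\rightrightarrows pUM\to M$; applying the cocontinuous functor $G$ and comparing with the coequalizer defining $\Psi(F)$ gives $\Psi(F)\cong G$, and the same presentation shows the bijection on tensor natural transformations, since such a transformation is determined by and reconstructible from its components on the free modules $pX$. I expect the main obstacle to be the coherence bookkeeping: one must genuinely check that $U:C_S\to C$ is monadic with the reflexive-coequalizer presentation preserved by all the cocontinuous tensor functors involved, and that the defining coequalizer of $\Psi(F)$ is compatible with $\otimes_S$ and with the associativity and symmetry constraints, so that $\Psi(F)$ really is a \emph{tensor} functor and the equivalence is one of tensor-functor categories rather than merely of underlying functor categories.
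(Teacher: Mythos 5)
Your construction is correct, but note that the paper itself gives no proof of this proposition --- it simply defers to (\cite{SaR}, II.1.5) --- so the relevant comparison is with the paper's proof of the closely analogous Proposition \ref{uEaff1}, and there your quasi-inverse is exactly the same base-change construction $M \mapsto F(p_*M)\otimes_{F(\A)}1_D$. Indeed, the cleanest packaging of what you wrote is to observe that $C_S$ \emph{is} $\M(\A)$ for the algebra $\A = S\otimes_R 1_C = p(1_C)$ in $C$ (an $R$-algebra map $S\to\End(X)$ is the same datum as an $\A$-module structure on $X$, by the copower adjunction $\Hom_C(S\otimes_R X,X)\cong\Hom_{\M(R)}(S,\End(X))$), and that an $S$-linear structure on $D$ extending the given $R$-linear one is the same as an algebra map $\sigma : F(\A)\cong S\otimes_R 1_D\to 1_D$, via $\Hom_{\Alg(D)}(S\otimes_R 1_D,1_D)\cong\Hom_{\Alg(R)}(S,\End(1_D))$; your reflexive coequalizer is then literally $F(UM)\otimes_{F(\A)}1_D$, and the proposition becomes a special case of Proposition \ref{uEaff1}. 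Two of the points you dismiss as bookkeeping are the ones that genuinely need care. First, monoidality of $\Psi(F)$: this is best seen by factoring $\Psi(F)$ as the tensor functor $M\mapsto F(UM)$ from $\M(\A)$ to $\M(F(\A))$ (which is tensor because $F$ preserves the coequalizer defining $\otimes_{\A}$) followed by base change along $\sigma$, which is a cocontinuous tensor functor. Second, fullness: to induce a map on the bar presentation $pUpUM\rightrightarrows pUM\to M$ you must commute a transformation $\alpha : Gp\to G'p$ past the counit $\epsilon_{pUM}$, which is \emph{not} of the form $p(f)$ for any $f$ in $C$, so plain naturality of $\alpha$ does not suffice; you need that $\alpha$ is a \emph{tensor} transformation, so that under $pUpX\cong p\A\otimes_S pX$ the counit becomes $\epsilon_{p1_C}\otimes_S\id_{pX}$ and the two algebra maps $F(\A)\to 1_D$ and $F'(\A)\to 1_D$ are identified (both being the canonical $S\otimes_R 1_D\to 1_D$). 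With those two checks made explicit, your argument is complete.
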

 
For details, see (\cite{SaR}, II.1.5). Now if $X$ is an $R$-scheme and we consider the base change $X_S := X \times_R S$, then it is easily seen that $\Q(X_S) \cong \Q(X)_S$.

\begin{proof}[Reduction of Theorem \ref{unipr} to $R=\mathds{Z}$] Use the base change homomorphism $\mathbb{Z} \to R$:
\[\Hom_{c\otimes/R}(\Q(\P^n_R),C) \cong \Hom_{c\otimes/R}(\Q(\P^n_{\mathds{Z}})_R,C) \cong \Hom_{c\otimes/\mathbb{Z}}(\Q(\P^n_{\mathds{Z}}),C) \qedhere\] \end{proof}

\section{Affine Morphisms}
 
Let $f : X \to Y$ be an affine morphism of schemes. Then $\A = f_* \O_X$ is a quasi-coherent algebra on $Y$, the morphism $f$ identifies with the structural morphism $\Spec(\A) \to Y$ and has the following universal property: There is a natural bijection
\[\Hom(Z,X) \cong \{(\phi,\sigma) : \phi \in \Hom(Z,Y) , \sigma \in \Hom_{\Alg(Z)}(\phi^* \A,\O_Z)\}\]
Here, we may identify $\Hom_{\Alg(Z)}(\phi^* \A,\O_Z) \cong \Hom_{\Alg(Y)}(\A,\phi_* \O_Z)$, but on the right hand side are not quasi-coherent in general. In this section, we want to give an analogous universal property of the tensor functor $f^* : \Q(Y) \to \Q(Z)$, following the same idea of categorification as before.

\begin{defi} Let $C$ be a cocomplete tensor category. An algebra $\A \in C$ is an object together with morphisms $u : 1_C \to \A$ and $m : \A \otimes \A \to \A$ such that the three diagrams asserting unitality, associativity and commutativity commute. The category of algebras in $C$ is denoted by $\Alg(C)$. For an algebra $\A$, an $\A$-module $M \in C$ is an object together with a morphism $\A \otimes M \to M$ such that the two diagrams asserting the compatibility with $u$ and $m$ commute. The category of $\A$-modules in $C$ is denoted by $\M(\A)$. This is again a cocomplete tensor category: The forgetful functor $\M(\A) \to C$ creates colimits. For $\A$-modules $M,N$ define $M \otimes_{\A} N$ to be the coequalizer of the two obvious maps $\A \otimes M \otimes N \to M \otimes N$, thus identifying the two actions of $\A$ on $M \otimes N$. The unit of $\M(\A)$ is $\A$. Remark that $1_C \in C$ is an algebra and $\M(1_C) = C$.\end{defi}
 
\begin{prop} \label{uEaff1} Let $C,D$ be a cocomplete tensor categories and $\A$ an algebra in $C$. Then there is an equivalence of categories
\[\Hom_{c\otimes}(\M(A),D) \cong \{(F,\sigma) : F \in \Hom_{c\otimes}(C,D), \sigma \in \Hom_{\Alg(D)}(F(\A),1_D)\}\]
\end{prop}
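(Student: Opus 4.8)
The plan is to exhibit the equivalence by writing down explicit functors in both directions and checking they are mutually quasi-inverse, the key tool being the free--forgetful adjunction for $\A$-modules and the canonical presentation of a module as a coequalizer of free ones.

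First, recall the free module functor, which I write $P \colon C \to \M(\A)$, $X \mapsto \A \otimes X$, the left adjoint of the forgetful functor $U \colon \M(\A) \to C$. It is cocontinuous (being a left adjoint) and a tensor functor, since $P(1_C) = \A = 1_{\M(\A)}$ and $P(X) \otimes_{\A} P(Y) \cong \A \otimes X \otimes Y = P(X \otimes Y)$. Given a cocontinuous tensor functor $G \colon \M(\A) \to D$, I set $F := G \circ P$, which is again cocontinuous and tensorial. The multiplication $m \colon \A \otimes \A \to \A$ is a morphism of $\A$-modules from the free module $P(\A)$ to the unit $1_{\M(\A)} = \A$ (its $\A$-linearity is exactly associativity), so applying $G$ yields $\sigma := G(m) \colon F(\A) \to 1_D$. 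Since tensor functors preserve algebras, $F(\A)$ is an algebra in $D$, and a routine check using the algebra axioms of $\A$ and functoriality of $G$ shows that $\sigma$ is a homomorphism of algebras. This defines $\Phi(G) = (F, \sigma)$, and a tensor natural transformation of functors $\M(\A) \to D$ plainly induces a morphism of pairs, so $\Phi$ is a functor.

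Conversely, given a pair $(F, \sigma)$ I build $G$ as a composite of two base changes. As $F$ is a tensor functor, $F(\A)$ is an algebra in $D$ and $F$ lifts to a cocontinuous tensor functor $\tilde F \colon \M(\A) \to \M(F(\A))$, $M \mapsto F(M)$ with action $F(\A) \otimes F(M) \cong F(\A \otimes M) \to F(M)$; cocontinuity and the tensor property follow from those of $F$ because colimits and $\otimes_{F(\A)}$ are created by the forgetful functor and $F$ preserves the coequalizer defining $\otimes_{\A}$. The algebra homomorphism $\sigma \colon F(\A) \to 1_D$ makes $1_D$ an $F(\A)$-module and gives the base-change functor $\sigma_! \colon \M(F(\A)) \to \M(1_D) = D$, $N \mapsto 1_D \otimes_{F(\A)} N$, which is cocontinuous (left adjoint to restriction along $\sigma$) and symmetric monoidal for $\otimes_{F(\A)}$. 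I then set $\Psi(F,\sigma) := G := \sigma_! \circ \tilde F$, a composite of cocontinuous tensor functors, with $G(\A) = 1_D \otimes_{F(\A)} F(\A) = 1_D$. It remains to produce natural isomorphisms $\Phi\Psi \cong \id$ and $\Psi\Phi \cong \id$. For $\Phi\Psi$, starting from $(F,\sigma)$ one computes $G \circ P$ on $X$ as $1_D \otimes_{F(\A)} F(\A \otimes X) \cong 1_D \otimes_{F(\A)} \bigl(F(\A) \otimes F(X)\bigr) \cong F(X)$, the middle term being the free $F(\A)$-module on $F(X)$; tracing $m$ through the same identifications recovers $\sigma$. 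For $\Psi\Phi$, the essential input is that every $\A$-module $M$ is the canonical reflexive coequalizer of free modules
\[ \A \otimes \A \otimes M \rightrightarrows \A \otimes M \to M, \]
with parallel maps $m \otimes \id_M$ and $\id_{\A} \otimes a$ (where $a$ is the action) and common section induced by the unit $u$. Both $G$ and $\Psi\Phi(G)$ are cocontinuous, both send this diagram to a coequalizer, and both agree on the free modules appearing (where they reduce to $F = G\circ P$) and on the structure maps; hence they agree on $M$, naturally in $M$.

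The \emph{main obstacle} is exactly this last identification $\Psi\Phi \cong \id$: one must check that the comparison built from the counit $\A \otimes M \to M$ of the free--forgetful adjunction is $F(\A)$-linear and descends to an isomorphism $1_D \otimes_{F(\A)} F(M) \xrightarrow{\sim} G(M)$, which is where cocontinuity of $G$ enters essentially and where the bulk of the (routine but bookkeeping-heavy) coherence verification lives. The remaining points -- that $\sigma$ is an algebra map, that $\tilde F$ and $\sigma_!$ are tensorial and cocontinuous, and that both assignments are functorial in $G$ and in $(F,\sigma)$ -- are straightforward.
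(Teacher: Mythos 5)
Your proposal is correct and follows essentially the same route as the paper: the same free--forgetful adjunction gives $F := G\circ P$ and $\sigma := G(m)$, and your inverse $G(M) = 1_D \otimes_{F(\A)} F(M)$ is exactly the paper's $F(p_*M)\otimes_{F(\A)} 1_D$, merely factored explicitly through $\M(F(\A))$. The only difference is that you spell out, via the canonical reflexive coequalizer $\A\otimes\A\otimes M \rightrightarrows \A\otimes M \to M$, the verification that the two constructions are mutually inverse, which the paper leaves as ``one checks.''
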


\begin{proof} The forgetful functor $f_* : \M(A) \to C$ has a left adjoint $f^* : C \to \M(\A)$, given by $X \mapsto X \otimes \A$, where $X \otimes \A$ is endowed with the obvious module structure. Remark that $f^*$ is cocontinuous tensor functor. Thus for every $G : \M(\A) \to D$ we get $F := G p^* : C \to D$. The multiplication $\A \otimes \A \to \A$ is a morphism $p^* \A \to 1_{\M(\A)}$, which is mapped by $G$ to a morphism $\sigma : F(\A) \to 1_D$ of algebras. Conversely, let us given a pair $(F,\sigma)$ as above. Define $G : \M(\A) \to D$ as follows\footnote{I am much obliged to Jacob Lurie who told to me how to construct the inverse functor.}: For every $M \in \M(A)$, the underlying object $p_* M \in C$ carries the structure of an $\A$-module. Thus $F(p_* M)$ is an $F(\A)$-module. Now extend scalars via $\sigma$:
\[G(M) := F(p_* M) \otimes_{F(\A)} 1_D.\]
It is easy to see that $G$ is a cocontinuous tensor functor. One checks that these constructions are inverse to each other. \end{proof}
 
\begin{cor} \label{uEaff} Let $X$ be a scheme and $\A$ a quasi-coherent algebra on $X$. Then for every cocomplete tensor category $C$ there is an equivalence of categories
{\small \[\Hom_{c\otimes}(\Q(\Spec(\A)),C) \cong \{(F,\sigma) : F \in \Hom_{c\otimes}(\Q(X),C), \sigma \in \Hom_{\Alg(C)}(F(\A),1_C)\}\]}
\end{cor}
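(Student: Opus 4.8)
The plan is to deduce the corollary directly from Proposition \ref{uEaff1}, once $\Q(\Spec(\A))$ is recognized as the category of $\A$-modules internal to the cocomplete tensor category $\Q(X)$. Note first a clash of names: the target category is called $C$ here, whereas in Proposition \ref{uEaff1} the ambient category carrying the algebra was $C$ and the target was $D$. I would apply that proposition with its $C$ instantiated as $\Q(X)$ and its $D$ instantiated as the present $C$.

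The key step is the classical identification $\Q(\Spec(\A)) \cong \M(\A)$ as cocomplete tensor categories, where $\M(\A)$ denotes the category of $\A$-modules in $\Q(X)$ in the sense of the preceding Definition. Writing $\pi : \Spec(\A) \to X$ for the structural (affine) morphism, I would realize this equivalence via pushforward $\pi_*$: since $\pi$ is affine, hence quasi-compact and separated, $\pi_*$ is exact and commutes with small direct sums on quasi-coherent sheaves, and therefore preserves all small colimits, i.e. is cocontinuous. It carries the unit $\O_{\Spec(\A)}$ to $\pi_* \O_{\Spec(\A)} = \A$, the unit of $\M(\A)$, and it carries $\otimes_{\O_{\Spec(\A)}}$ to $\otimes_{\A}$. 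The quasi-inverse sends an $\A$-module $M$ to the quasi-coherent sheaf $\widetilde{M}$ on $\Spec(\A)$ obtained by gluing the affine-local construction.

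With this equivalence in hand the corollary is immediate. Proposition \ref{uEaff1}, applied as indicated, yields
\[\Hom_{c\otimes}(\M(\A),C) \cong \{(F,\sigma) : F \in \Hom_{c\otimes}(\Q(X),C),\ \sigma \in \Hom_{\Alg(C)}(F(\A),1_C)\},\]
and precomposing the left-hand side with the tensor equivalence $\Q(\Spec(\A)) \cong \M(\A)$ gives precisely the asserted equivalence.

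The main obstacle is the careful verification that $\pi_* : \Q(\Spec(\A)) \to \M(\A)$ is a tensor equivalence, and in particular that the internally defined tensor product $\otimes_{\A}$ (the coequalizer of the two $\A$-actions on $M \otimes_{\O_X} N$) agrees, after transport, with $\otimes_{\O_{\Spec(\A)}}$. I would check this affine-locally on $X$: over an affine open $\Spec(B) \subseteq X$ with $\A$ corresponding to a $B$-algebra $A$, it reduces to the standard fact that $\M(A) \cong \Q(\Spec(A))$ with $\otimes_{A}$ corresponding to $\otimes_{\O_{\Spec A}}$, together with the observation that both the sheaf-theoretic tensor product and the coequalizer presentation of $\otimes_{\A}$ commute with restriction to opens. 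Everything else in the argument is formal.
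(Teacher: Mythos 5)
Your proposal is correct and follows essentially the same route as the paper: identify $\Q(\Spec(\A))$ with the category $\M(\A)$ of $\A$-modules internal to $\Q(X)$ (the paper likewise reduces this identification to the affine case) and then invoke Proposition \ref{uEaff1} with its ambient category instantiated as $\Q(X)$. The extra detail you supply on why $\pi_*$ is a cocontinuous tensor equivalence is a fleshed-out version of the paper's one-line justification, not a different argument.
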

 
\begin{proof} Algebras in $\Q(X)$ are precisely the quasi-coherent algebras $\A$ and there is an equivalence of categories $\M(\A) \cong \Q(\Spec(\A))$, since this is true for affine $X$. \end{proof}
 
\section{Tensorial schemes}

\begin{defi} An $R$-scheme $X$ is called \emph{tensorial} if for every $R$-scheme $Y$ the functor
\begin{eqnarray*}
\Hom_R(Y,X) &\to& \Hom_{c\otimes}\bigl(\Q(X),\Q(Y)\bigr) \\  f & \mapsto & f^* 
\end{eqnarray*}
is an equivalence of categories.
\end{defi}

\begin{thm} Every affine $R$-scheme is tensorial. \end{thm}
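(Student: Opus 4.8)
The plan is to recognize that this theorem is, up to a routine identification, already contained in Corollary \ref{unimod}. First I would write an affine $R$-scheme as $X = \Spec(S)$ for some $R$-algebra $S$ and invoke the standard fact that quasi-coherent sheaves on an affine scheme are the same as modules over its coordinate ring, i.e. $\Q(\Spec(S)) \cong \M(S)$ as $R$-linear cocomplete tensor categories (the tensor products and units match, with $\O_X$ corresponding to $S$). Under this identification the category appearing in the definition of tensoriality becomes
\[\Hom_{c\otimes/R}(\Q(X),\Q(Y)) \cong \Hom_{c\otimes/R}(\M(S),\Q(Y)).\]

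Next I would apply Corollary \ref{unimod} with its test scheme taken to be the given $R$-scheme $Y$. This supplies a chain of equivalences
\[\Hom_R(Y,\Spec(S)) \cong \Hom_{\Alg(R)}(S,\Gamma(Y,\O_Y)) \cong \Hom_{c\otimes/R}(\M(S),\Q(Y)),\]
whose composite is, by the final sentence of that corollary, exactly the assignment $f \mapsto f^*$. Composing with the identification from the previous paragraph yields an equivalence $\Hom_R(Y,X) \cong \Hom_{c\otimes/R}(\Q(X),\Q(Y))$ realized by $f \mapsto f^*$. Since this holds for an arbitrary $R$-scheme $Y$, the affine scheme $X$ is tensorial.

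As for the main obstacle: there is essentially none of substance, since all the genuine work was done in establishing the universal property of $\M(S)$ and its Corollary \ref{unimod}. The only point deserving care is confirming that the abstract equivalence of Corollary \ref{unimod} is implemented by pullback $f \mapsto f^*$ and not by some other comparison functor; but this is precisely what the corollary records, namely that the universal cocontinuous tensor functor $\M(S) \to \Q(Y)$ classifies the $R$-algebra homomorphism $S \to \Gamma(Y,\O_Y)$, which is in turn the same datum as a morphism $Y \to \Spec(S)$ together with its pullback. Faithfulness of $f \mapsto f^*$ requires no separate verification, as observed in the first section, because $\Hom_R(Y,X)$ is discrete.
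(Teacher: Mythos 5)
Your proposal is correct and is precisely the paper's argument: the paper's proof of this theorem is the single line ``This is just Corollary \ref{unimod}'', and you have simply unpacked that reference, including the identification $\Q(\Spec(S)) \cong \M(S)$ and the observation that the composite equivalence in the corollary is $f \mapsto f^*$. No discrepancies to report.
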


\begin{proof} This is just Corollary \ref{unimod}. \end{proof}

\begin{thm} The projective space $\P^n_R$ is tensorial. \end{thm}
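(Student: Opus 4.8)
The plan is to read off the assertion from the universal property of $\Q(\P^n_R)$ established in Theorem \ref{unipr}, the geometric translation of invertibility in Lemma \ref{lempr}, and the classical moduli description of $\P^n_R$ recalled before that theorem. Fix an $R$-scheme $Y$ and apply Theorem \ref{unipr} with $C = \Q(Y)$. This identifies $\Hom_{c\otimes}(\Q(\P^n_R),\Q(Y))$ with the category of pairs $(\L,s)$, where $\L \in \Q(Y)$ is invertible and $s : \O_Y^{n+1} \to \L$ is a morphism for which the sequence $(*)$ of Theorem \ref{unipr} is a cokernel diagram; moreover that theorem sends a functor $F$ to $(F(\O_\P(1)), F(\O_\P^{n+1} \twoheadrightarrow \O_\P(1)))$, so a pullback $f^*$ is sent to the pair obtained by pulling back the universal line bundle and its universal sections. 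Hence it suffices to show that this category of pairs is, via this assignment, equivalent to the discrete set $\Hom_R(Y,\P^n_R)$.

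First I would translate the abstract conditions into geometric ones using Lemma \ref{lempr}. That lemma shows $\L$ is invertible in the tensor sense exactly when it is a line bundle, and that for a line bundle $\L$ every epimorphism $\O_Y^{n+1} \twoheadrightarrow \L$ produces a cokernel diagram $(*)$. Conversely, if $(*)$ is a cokernel diagram then $s$ is by definition the cokernel map and so an epimorphism. Thus the category of pairs coincides with the groupoid whose objects are pairs $(\L,s)$ consisting of a line bundle $\L$ together with an epimorphism $s : \O_Y^{n+1} \twoheadrightarrow \L$, and whose morphisms $(\L,s) \to (\L',s')$ are isomorphisms $\phi : \L \to \L'$ with $\phi \circ s = s'$. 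The fact that morphisms of pairs are \emph{iso}morphisms and satisfy this compatibility comes from tensor naturality: a tensor natural transformation $f^* \to g^*$ is automatically invertible on the invertible object $\O_\P(1)$ and restricts to the identity on $\O_\P = 1$.

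Next I would show this groupoid is equivalent to a discrete category. Since $s$ is an epimorphism, the sections $s_0,\dots,s_n$ generate $\L$, so any morphism of pairs is determined on a generating family; it is therefore unique when it exists, and in particular the sole automorphism of a pair is the identity. Hence the groupoid is equivalent to its set of isomorphism classes. The classical universal property of $\P^n_R$ as the moduli space of a line bundle equipped with $n+1$ generating sections identifies this set with $\Hom_R(Y,\P^n_R)$, and under this bijection a morphism $f$ corresponds to the pair $(f^*\O_\P(1), f^*(\dots))$. Chaining the three equivalences yields $\Hom_R(Y,\P^n_R) \cong \Hom_{c\otimes}(\Q(\P^n_R),\Q(Y))$, and the bookkeeping above identifies the composite with $f \mapsto f^*$.

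The verifications that are conceptually routine but need care are the naturality claims, namely that a tensor natural transformation between $f^*$ and $g^*$ corresponds under Theorem \ref{unipr} to an isomorphism of the associated pairs, and that the moduli bijection is the one induced by pullback. I expect the main obstacle to lie in full faithfulness: one must check that $\Hom_{c\otimes}(f^*,g^*)$ is a singleton when $f = g$ and empty otherwise. This rests entirely on the rigidity provided by $s$ being an epimorphism, i.e. on the generation of $\L$ by $s_0,\dots,s_n$, combined with the automatic invertibility of tensor natural transformations on invertible objects. Once this rigidity is secured, essential surjectivity follows immediately from Theorem \ref{unipr} together with Lemma \ref{lempr}, since any cocontinuous tensor functor yields a pair with $\L$ a line bundle and $s$ an epimorphism, which the classical universal property realizes as some $f^*$.
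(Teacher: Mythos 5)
Your proposal is correct and follows essentially the same route as the paper: apply Theorem \ref{unipr} with $C=\Q(Y)$, use Lemma \ref{lempr} to translate the pairs $(\L,s)$ into line bundles with $n+1$ generating global sections, and invoke the classical moduli description of $\P^n_R$. The extra care you take with discreteness of the resulting groupoid (rigidity from $s$ being an epimorphism) is exactly what the paper compresses into ``a category which is known to be equivalent to $\Hom_R(X,\P^n_R)$.''
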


\begin{proof} Let $X$ be an $R$-scheme. Then Theorem \ref{unipr} yields an equivalence of categories between $\Hom_{c\otimes/R}(\Q(\P^n_R),\Q(X))$ and the category of pairs $(\L,s)$, where $\L \in \Q(X)$ is an invertible object and $s : \O_X^{n+1} \to \L$ is a morphism such that the sequence
\[\xymatrix{(\L^{\otimes -1})^{\binom{n+1}{2}} \ar[r] &  1^{n+1} \ar[r] &  \L \ar[r] &  0}\]
is exact. According to Lemma \ref{lempr}, this just means that $\L$ is invertible in the usual sense and that $s$ is an epimorphism. Thus we arrive at a category which is known to be equivalent to $\Hom_R(X,\P^n_R)$. The resulting equivalence between $\Hom_R(X,\P^n_R)$ and $\Hom_{c\otimes/R}(\Q(\P^n_R),\Q(X))$ is $f \mapsto f^*$ by construction. \end{proof}

\begin{thm} Let $X$ be a tensorial scheme and $Y \to X$ be an affine morphism. Then $Y$ is tensorial. In particular: Every closed subscheme of a tensorial scheme is tensorial. \end{thm}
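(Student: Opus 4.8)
The plan is to reduce the tensoriality of $Y$ to that of $X$ by rewriting both sides of the defining equivalence for $Y$ in terms of the data for $X$ together with the extra algebra structure that encodes the affine morphism. Write $g : Y \to X$ for the given affine morphism and set $\A := g_*\O_Y$, a quasi-coherent algebra on $X$; then $g$ is the structural morphism $\Spec(\A) \to X$, so $Y = \Spec(\A)$ and $\Q(Y) = \Q(\Spec(\A))$. Fix an arbitrary $R$-scheme $Z$; the goal is to show that $f \mapsto f^*$ is an equivalence $\Hom_R(Z,Y) \cong \Hom_{c\otimes}(\Q(Y),\Q(Z))$.

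First I would apply Corollary \ref{uEaff} with $C = \Q(Z)$ to obtain an equivalence between $\Hom_{c\otimes}(\Q(Y),\Q(Z))$ and the category of pairs $(F,\sigma)$, where $F \in \Hom_{c\otimes}(\Q(X),\Q(Z))$ and $\sigma \in \Hom_{\Alg(\Q(Z))}(F(\A),1_{\Q(Z)})$. Next I invoke the tensoriality of $X$: the functor $\phi \mapsto \phi^*$ is an equivalence $\Hom_R(Z,X) \cong \Hom_{c\otimes}(\Q(X),\Q(Z))$, so every $F$ is canonically isomorphic to $\phi^*$ for a unique $\phi : Z \to X$. Since tensor functors preserve algebras and a tensor natural isomorphism respects the algebra structure, $F(\A) \cong \phi^*\A$ as algebras in $\Q(Z)$, while $1_{\Q(Z)} = \O_Z$. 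Because algebras in $\Q(Z)$ are exactly the quasi-coherent algebras on $Z$ and their morphisms are morphisms of quasi-coherent $\O_Z$-algebras (as used in the proof of Corollary \ref{uEaff}), the indexing category of pairs becomes the category of pairs $(\phi,\sigma)$ with $\phi \in \Hom_R(Z,X)$ and $\sigma \in \Hom_{\Alg(Z)}(\phi^*\A,\O_Z)$. By the universal property of the affine morphism $g$ recalled at the beginning of Section 4, this last category is naturally equivalent to $\Hom_R(Z,Y)$. Composing the three equivalences yields $\Hom_R(Z,Y) \cong \Hom_{c\otimes}(\Q(Y),\Q(Z))$.

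The step I expect to be the main obstacle is verifying that this composite equivalence really is the functor $f \mapsto f^*$, rather than merely some abstract equivalence. For this I would trace the constructions: in the proof of Proposition \ref{uEaff1} the functor $F$ attached to $G$ is $G \circ p^*$, where $p^* : \Q(X) \to \Q(Y)$ is the base-change functor $M \mapsto M \otimes \A$, which in the geometric setting is precisely the pullback $g^*$. Hence a morphism $f : Z \to Y$ with $G = f^*$ is sent to $F = f^* \circ g^* \cong (g \circ f)^* = \phi^*$, so the $\phi$ produced above is exactly the composite $g \circ f$, and $\sigma$ is the corresponding algebra homomorphism; this matches the pair assigned to $f$ by the universal property of $g$. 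Checking that these identifications are natural in $Z$ and compatible with composition is routine bookkeeping, and completes the proof that $Y$ is tensorial. Finally, since every closed immersion is an affine morphism, the \emph{in particular} assertion is immediate.
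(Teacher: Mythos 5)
Your proposal is correct and follows essentially the same route as the paper: apply Corollary \ref{uEaff} to identify $\Hom_{c\otimes}(\Q(Y),\Q(Z))$ with pairs $(F,\sigma)$, use tensoriality of $X$ to replace $F$ by $\phi^*$, and invoke the universal property of the affine morphism, finishing by tracing the equivalences to see that the composite is $f \mapsto f^*$. Your explicit unwinding of $F = G\circ p^*$ with $p^* = g^*$ is a welcome elaboration of the paper's terser ``inspection of the equivalences.''
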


\begin{proof} Write $Y = \Spec(\A)$ for some quasi-coherent algebra $\A$ on $X$. Let $Z$ be a scheme. Then Corollary \ref{uEaff} yields:
\[\Hom_{c\otimes}(\Q(Y),\Q(Z)) \cong \{(F,\sigma) : F \in \Hom_{c\otimes}(\Q(X),\Q(Z)) ,\]
\[ \hspace{60mm} \sigma \in \Hom_{\Alg(Z)}(F(\A),\O_Z)\}\]
\[\cong \{(f,\sigma) : f \in \Hom(Z,X) ,  \sigma \in \Hom_{\Alg(Z)}(f^* \A,\O_Z)\} \cong \Hom(Z,Y).\]
Now an inspection of the equivalences shows that this is exactly the functor $f^* \mapsfrom f$. \end{proof}
 
A projective scheme over $R$ is meant to be a closed subscheme of $\P^n_R$ for some $n$. Now combine the two previous theorems:
 
\begin{thm} Every projective $R$-scheme is tensorial. \end{thm}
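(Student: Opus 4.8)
The plan is to obtain the statement as an immediate corollary of the two theorems directly above it, with no further work required. By definition a projective $R$-scheme $Y$ is a closed subscheme of $\P^n_R$ for some $n$. The first input I would use is the theorem asserting that $\P^n_R$ is tensorial, which supplies a tensorial ambient scheme. The second input is the theorem asserting that if $X$ is tensorial and $Y \to X$ is an affine morphism, then $Y$ is tensorial; its quoted special case is precisely that closed subschemes of tensorial schemes are tensorial. Applying the latter with $X = \P^n_R$ to the closed immersion $Y \hookrightarrow \P^n_R$ then yields at once that $Y$ is tensorial, and the equivalence is $f \mapsto f^*$ by construction of the two equivalences being chained.

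I do not expect any genuine obstacle, since all the substance has already been spent upstream: the universal property of $\Q(\P^n_R)$ (Theorem \ref{unipr} together with Lemma \ref{lempr}, which rules out non-good epimorphisms in $\Q(X)$) powers the first input, while the universal property of an affine morphism (Corollary \ref{uEaff}, resting on Proposition \ref{uEaff1}) powers the second. The only point meriting even a remark is that the defining closed immersion of $Y$ is an affine morphism, which is standard: a closed immersion is affine, with $Y = \Spec(\A)$ for the quasi-coherent $\O_{\P^n_R}$-algebra $\A = \O_{\P^n_R}/\mathcal{I}_Y$ cut out by the ideal sheaf of $Y$, exactly the form required by the affine-morphism theorem. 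Thus the proof reduces to a single line composing the equivalence $\Hom_R(Y,X) \cong \Hom_{c\otimes}(\Q(X),\Q(Y))$ for $X = \P^n_R$ with the closed-subscheme theorem, and the final theorem is simply the synthesis of these two previously established universal properties.
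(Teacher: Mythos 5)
Your proposal is correct and coincides exactly with the paper's argument: the paper defines a projective $R$-scheme as a closed subscheme of $\P^n_R$ and obtains the theorem by combining the tensoriality of $\P^n_R$ with the theorem that affine morphisms (in particular closed immersions) into tensorial schemes yield tensorial schemes. Nothing further is needed.
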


\begin{rem} Actually, the proof gives us for every projective $R$-scheme $X$ a universal property of $\Q(X)$. For example in case of the Fermat curve $X = V_{\P^2}(x^n+y^n-z^n)$ we get that $\Q(X)$ is the free cocomplete tensor category on an invertible object $\L$ together with three "good generators" $s_0,s_1,s_2 : 1 \to \L$ satisfying $s_0^{\otimes n} + s_1^{\otimes n} = s_2^{\otimes n}$ as morphisms $1 \to \L^{\otimes n}$.
  
Using the Plücker embedding $\mathrm{Gr}_d(R^n) \hookrightarrow \P(R^{\binom{n}{d}})$, we also get a universal property of $\Q(\mathrm{Gr}_d(R^n))$ which is similar to the one of the Grassmannian classifying locally free objects of rank $d$ with $n$ "good" generators. We plan to include details in a later work. \end{rem}

\begin{thm} For every family of tensorial schemes $(X_i)_{i \in I}$, the disjoint union $\coprod_{i \in I} X_i$ is tensorial. \end{thm}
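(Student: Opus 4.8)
The plan is to begin by identifying $\Q(\coprod_i X_i)$ with the product category $\prod_i \Q(X_i)$, where the $R$-linear structure, colimits, tensor product and unit are all computed componentwise; this is just the statement that a quasi-coherent sheaf on a disjoint union is a family of quasi-coherent sheaves on the pieces. The key structural observation is that inside $\prod_i \Q(X_i)$ the unit $1 = (\O_{X_j})_j$ decomposes as a coproduct $1 \cong \bigoplus_i \epsilon_i$ of the orthogonal idempotent objects $\epsilon_i := (\delta_{ij}\O_{X_j})_j$, which satisfy $\epsilon_i \otimes \epsilon_i \cong \epsilon_i$ and $\epsilon_i \otimes \epsilon_j = 0$ for $i \neq j$, and for which $-\otimes \epsilon_i$ is, up to equivalence, the projection onto the $i$-th factor $\Q(X_i)$ (its essential image being the objects supported at $i$).

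Next I would analyze a cocontinuous tensor functor $F : \prod_i \Q(X_i) \to \Q(Y)$ by setting $e_i := F(\epsilon_i)$. Since $F$ preserves coproducts and the tensor relations, the $e_i$ are orthogonal idempotent direct summands of $\O_Y = F(1)$ with $\bigoplus_i e_i \cong \O_Y$, the summand inclusions being the split monos $F(\epsilon_i \hookrightarrow 1)$. The central step is to show that such an orthogonal-idempotent decomposition of the structure sheaf comes from an open-closed decomposition $Y = \coprod_i Y_i$. Passing to stalks, $(\O_Y)_y \cong \bigoplus_i (e_i)_y$ exhibits a local ring as a direct sum of orthogonal idempotent ideals, so exactly one summand is nonzero; and on each quasi-compact open $\Spec A \subseteq Y$ one has $A = \bigoplus_i \Gamma(e_i)$, so writing $1 \in A$ as a finite sum and using orthogonality kills all but finitely many summands. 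Hence the induced map $Y \to I$ is locally constant, yielding $Y = \coprod_i Y_i$ with $e_i$ the structure sheaf of $Y_i$ extended by zero. I expect this to be the main obstacle: turning the abstract decomposition of $\O_Y$ into an honest open-closed decomposition of $Y$, and in particular handling an infinite index set $I$, where one cannot simply sum the idempotents in $\Gamma(Y,\O_Y)$ and must argue stalk-wise together with the local-finiteness observation.

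With the decomposition in hand $\Q(Y) \cong \prod_i \Q(Y_i)$, and because $F$ is a tensor functor it carries the $i$-th factor $\Q(X_i)$ into the $i$-th factor $\Q(Y_i)$ (an object in the essential image of $-\otimes \epsilon_i$ maps to one in the essential image of $-\otimes e_i$). I would read off the restrictions $F_i : \Q(X_i) \to \Q(Y_i)$, which are again cocontinuous tensor functors; conversely a decomposition $Y = \coprod_i Y_i$ together with functors $F_i$ reassembles componentwise into $F$. Applying the same bookkeeping to tensor natural transformations — which fix the unit $\O_Y$ and hence its idempotent decomposition, so they only connect functors inducing the \emph{same} decomposition — gives
\[\Hom_{c\otimes}(\Q(X),\Q(Y)) \;\cong\; \coprod_{Y = \coprod_i Y_i}\; \prod_i \Hom_{c\otimes}(\Q(X_i),\Q(Y_i)),\]
the outer coproduct ranging over open-closed decompositions of $Y$ indexed by $I$.

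Finally I would invoke the tensoriality of each $X_i$ to replace $\Hom_{c\otimes}(\Q(X_i),\Q(Y_i))$ by $\Hom_R(Y_i,X_i)$, and compare with the geometric side: a morphism $Y \to \coprod_i X_i$ is the same as an open-closed decomposition $Y = \coprod_i f^{-1}(X_i)$ together with morphisms $f^{-1}(X_i) \to X_i$, so that $\Hom_R(Y,\coprod_i X_i) \cong \coprod_{Y=\coprod_i Y_i} \prod_i \Hom_R(Y_i,X_i)$. Tracing $f^*$ through these identifications — checking that $f^*(\epsilon_i)$ is exactly the idempotent of $f^{-1}(X_i)$, so that the decomposition attached to $f^*$ is the geometric one — confirms that the resulting equivalence is the functor $f \mapsto f^*$, which is what is required.
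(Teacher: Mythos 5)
Your proposal is correct and follows essentially the same route as the paper's (sketched) proof: identify $\Q(\coprod_i X_i)$ with $\prod_i \Q(X_i)$, decompose its unit into orthogonal idempotents, transport them through $F$ to obtain a clopen decomposition of $Y$, and reduce to the tensoriality of each $X_i$ via the universal property of the disjoint union. You additionally supply the stalk-wise argument turning the idempotent decomposition of $\O_Y$ into an actual decomposition of $Y$ (including the infinite-index subtlety), a detail the paper leaves implicit in its sketch.
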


\begin{proof} We only sketch the proof since it follows the same ideas as the previous ones. First show that for a family of cocomplete tensor categories $\{C_i\}$ the product $\prod_i C_i$ has the following universal property: Morphisms $\prod_i C_i \to D$ are given by a  "decompsition of $1_D$ into orthogonal idempotents $e_i$" and morphisms $C_i \to D_{e_i}$ for a suitably defined localization $D_{e_i}$. Apply this to $C_i=\Q(X_i)$, $D=\Q(Y)$ and use the following universal property of the disjoint union: Morphisms $Y \to \coprod_i X_i$ are given by a decomposition $Y = \coprod_i Y_i$ and morphisms $Y_i \to X_i$. \end{proof}

\begin{rem} We do not know if tensorial schemes are closed under fiber products. Namely, it is not clear how to construct $\Q(X \times_R Y)$ out of $\Q(X)$ and $\Q(Y)$. In the derived setting, this has been done (\cite{BFN}, 1.2). At least for projective schemes $X,Y$ we can show that $\Q(X \times_R Y)$ is the $2$-coproduct of $\Q(X)$ and $\Q(Y)$ in the $2$-category of all $R$-linear cocomplete tensor categories, using the Segre embedding in combination with Theorem \ref{unipr}.\end{rem}

\begin{rem} Let $X,Y$ be quasi-compact semi-separated schemes. An inspection of Jacob Lurie's proof (\cite{Lur}) shows that $\Hom(Y,X) \to \Hom_{c\otimes}(\Q(Y),\Q(X))$ is fully faithful if we restrict ourselves to tensor natural isomorphisms on the right hand side and that the essential image consists of those cocontinuous tensor functors $F : \Q(Y) \to \Q(X)$ which preserve the property of being \emph{faithfully flat}. In fact, it is enough to test that $F(p_* \O_U)$ is faithfully flat, where $p : U \to Y$ is some surjective smooth morphism with $U$ affine. This leads to the question if (faithfully) flat quasi-coherent modules may be described in the language of tensor categories. Remark that locally free quasi-coherent modules are exactly the dualizable ones, thus $F$ preserves them anyway. \end{rem}

We do not know of any scheme which is not tensorial.

\clearpage\newpage

\end{document}